\theoremstyle{plain}
\newtheorem{theorem}{Theorem}
\numberwithin{theorem}{section}
\newtheorem{corollary}[theorem]{Corollary}
\newtheorem{lemma}[theorem]{Lemma}
\theoremstyle{definition}
\newtheorem{definition}[theorem]{Definition}
\newtheorem{example}{Example}
\theoremstyle{remark}
\newtheorem*{remark}{Remark}
\newcommand{\C}{\mathbb{C}}
\newcommand{\calP}{\mathcal{P}}
\newcommand{\calS}{\mathcal{S}}
\begin{document}

\title{Partition-theoretic formulas for arithmetic densities, II}

\author{Ken Ono, Robert Schneider and Ian Wagner}

\address{Department of Mathematics, University of Virginia,
Charlottesville, VA 22904} \email{ken.ono691@virginia.edu}

\address{Department of Mathematics, University of Georgia,
Athens, GA 30602}\email{robert.schneider@uga.edu}

\address{Department of Mathematics, Vanderbilt University,
Nashville, TN 37240}
\email{ian.c.wagner@vanderbilt.edu}

\thanks{The first author thanks the support of the Thomas Jefferson Fund and the NSF (DMS-1601306 and DMS-2002265).
}
\subjclass[2010]{05A17, 11P82}

\begin{abstract} 
In earlier work generalizing a 1977 theorem of Alladi, the authors proved a {\it partition-theoretic} formula to compute arithmetic densities of certain subsets of the positive integers $\mathbb N$ as limiting values of $q$-series as $q\to \zeta$ a root of unity (instead of using the usual Dirichlet series to compute densities), replacing multiplicative structures of $\mathbb N$ by analogous structures in the integer partitions $\mathcal P$. 
In recent work, Wang obtains a wide generalization of Alladi's original theorem, in which arithmetic densities of subsets of prime numbers are computed as values of Dirichlet series
arising from Dirichlet convolutions. Here the authors prove that Wang's extension has a
partition-theoretic analogue as well, yielding new $q$-series density formulas for any subset of $\mathbb N$. To do so, we outline a theory of $q$-series density calculations from first principles, based on a statistic we call the ``$q$-density'' of a given subset. This theory in turn yields infinite families of further formulas for arithmetic densities. 
\end{abstract}

\maketitle

\centerline{\it In honor of Srinivasa Ramanujan on the 100th anniversary of his passing}
\  

\section{Introduction and statement of results}\label{Sect1}

It is well-known that the M\"obius function satisfies
\begin{equation}\label{First}
-\lim_{N\rightarrow \infty} \sum_{n=2}^{N} \frac{\mu(n)}{n} = 1.
\end{equation}
This fact can be used to obtain formulas
for arithmetic densities for suitable subsets of the positive integers.
A celebrated example of this idea was obtained by  Alladi  \cite{Alladi1}, who proved for $r,t,\in \mathbb N$ that if $\gcd(r, t) = 1$, then
\begin{equation}\label{AlladiSum}
-\sum_{\substack{ n \geq 2 \\ p_{\rm{min}}(n) \equiv r \pmod{t}}} \frac{\mu(n)}{n} = \frac{1}{\varphi(t)}. 
\end{equation}
Here $\varphi(t)$ is Euler's phi function, and $p_{\rm{min}}(n)$ is the smallest prime factor of $n$.  

Over the last few years, (\ref{AlladiSum}) has been generalized in various ways
in works by Dawsey \cite{Dawsey}, Sweeting and Woo \cite{SweetingWoo}, Kural, McDonald and Sah \cite{KMS},  and Wang \cite{Wang1, Wang2}. In a recent preprint \cite{Wang3}, Wang obtained a beautiful generalization
of this phenomenon which makes use of Dirichlet convolutions. 
If $a: \mathbb{N} \to \C$ is an arithmetic function satisfying $a(1) =1$ and $\sum_{n \geq 2} \frac{|a(n)|}{n} \log \log n < \infty$, then for suitable subsets $\mathcal{S}$ of the positive integers Wang proves that
\begin{equation}\label{WangSum}
- \lim_{N \to \infty} \sum_{\substack{2 \leq n \leq N \\ p_{{\rm{min}}}(n) \in \mathcal{S}}} \frac{(\mu * a)(n)}{n} = d_{\mathcal{S}},
\end{equation}
where $(f * g)(n) = \sum_{d |n} f(d)g(n/d)$ is the classical Dirichlet convolution of $f$ and $g$.

In earlier work the authors obtained a partition-theoretic analogue of formulas such as (\ref{AlladiSum})
using natural analogies between the multiplicative structure of the integers and
the additive structure of partitions.  The purpose of this note is to show that an analogue also exists for (\ref{WangSum}), as a consequence of a general theorem offering infinitely many formulas (see Theorem \ref{Thm1}).

We first recall fundamental features of these analogies.
A \textit{partition} is a finite non-increasing sequence of positive integers, say $\lambda = (\lambda_{1}, \lambda_{2},..., \lambda_{\ell(\lambda)})$, where $\ell(\lambda)$ denotes the {\it length} (number of parts) of $\lambda$.  The {\it size} of $\lambda$ is $\vert \lambda \vert := \lambda_{1} + \lambda_{2} + \cdot \cdot \cdot + \lambda_{\ell(\lambda)}$ (sum of parts) and the {\it norm} of the partition is $N(\lambda) = \lambda_{1} \cdot \lambda_{2} \cdots \lambda_{\ell(\lambda)}$ (product of parts).\footnote{The norm is referred to as the ``integer'' of $\lambda$ and written $n_{\lambda}$ in some earlier works \cite{OnoRolenS, Robert_zeta}.}  Furthermore, we let $\rm{sm}(\lambda) := \lambda_{\ell(\lambda)}$ denote the {\it smallest part} of $\lambda$ (resp. $\rm{lg}(\lambda) := \lambda_{1}$ the {\it largest part} of $\lambda$).

We will use the {\it partition M\"{o}bius function}
\begin{equation}\label{OnePointFour}
\mu_{\mathcal{P}}(\lambda) := \begin{cases} 0 & \rm{if} \ \lambda \ \rm{has \ repeated \ parts}, \\
(-1)^{\ell(\lambda)} & \rm{otherwise}. \end{cases}  
\end{equation}
Notice that $\mu_{\mathcal{P}}(\lambda) = 0$ if $\lambda$ has any repeated parts, which is analogous to the vanishing of $\mu(n)$ for integers $n$ which are not square-free.  In particular, the different parts in a partition $\lambda$ play the role of prime factors of $n$ in this analogy.  We define $\mu_{\mathcal{P}}^{*}(\lambda):= - \mu_{\mathcal{P}}(\lambda)$ as in Dawsey's theorem, to eliminate minus signs in our formulas. Finally, we recall the usual {\it $q$-Pochhammer symbol} $(a;q)_{n}:=\prod_{k=0}^{n-1}(1-aq^k)$ and $(a;q)_{\infty}:=\lim_{n\to \infty}(a;q)_{n}$ for $a,q\in \mathbb C, |q|<1$. 

To complete the analogy, also recall the {\it partition phi function
} defined in \cite{Schneider_arithmetic}, an analogue of Euler's totient function,
\begin{equation*}
\varphi_{\mathcal{P}}(\lambda) := N(\lambda)\cdot  \prod_{\substack{k \in \lambda \\ \text{without repetition}}} \left(1 - k^{-1} \right),
\end{equation*}  
where ``$k\in \lambda$'' means $k\in \mathbb N$ is a part of $\lambda \in \mathcal P$. 

Table 1 offers some of the identifications making up this multiplicative-additive analogy.
\bigskip

\begingroup\label{T1}
\begin{center}
\begin{tabular}{ | c | c | }
\hline
{\bf Natural number $n$} & {\bf Partition $\lambda$}\\ \hline
Prime factors of $n$ & Parts of $\lambda$ \\ \hline
Square-free integers & Partitions into distinct parts \\ \hline
$\mu(n)$ & $\mu_{\mathcal{P}}(\lambda)$ \\ \hline
$\varphi(n)$ & $\varphi_{\mathcal{P}}(\lambda)$ \\ \hline
$p_{\rm{min}}(n)$ & $\rm{sm}(\lambda)$ \\ \hline
$p_{\rm{max}}(n)$ & $\rm{lg}(\lambda)$ \\ \hline
$n^{-s}$ & $q^{\vert \lambda \vert}$ \\ \hline
${\zeta(s)^{-1}}$ & ${(q;q)_{\infty}}$ \\  \hline
$s\to1$ & $q\rightarrow 1$\\
\hline
\end{tabular}
\smallskip
\captionof{table}{\textit{Analogies between arithmetic and partition theory}}
\end{center}
\endgroup

\begin{remark}
We note that other, different multiplicative-additive analogies exist, such as those drawn in \cite{SchneiderSills} between partitions of length $m$ and those of size $m$, in which the duality instead correlates $\zeta(s)$ to the geometric series $q(1-q)^{-1}$, with $m^{-s}$ in bijection with $q^m$. 
\end{remark}

To describe our results, let
$\calS$ be a subset of the positive integers with arithmetic density
\begin{equation*}
d_{\calS}:=\lim_{N \to \infty} \frac{\# \{ n \in \calS : n \leq N \}}{N}.\end{equation*}
As noted above, we let $\mu_{\calP}^*(\lambda):=-\mu_{\calP}(\lambda).$
In \cite{Paper1} the function
\begin{equation}\label{OnePointNine}
F_{\mathcal{S}}(q) := \sum_{\substack{ \lambda \in \mathcal{P} \\ \rm{sm}(\lambda) \in \mathcal{S}}} \mu_{\mathcal{P}}^{*}(\lambda) q^{\vert \lambda \vert}
\end{equation}
was defined, and it was shown, for suitable $\mathcal{S}$ and associated roots of unity $\zeta$, that
\begin{equation}\label{Fdens}
\lim_{q \to \zeta} F_{\mathcal{S}}(q) = d_{\mathcal{S}}.
\end{equation}
This theorem is a partition-theoretic analogue of (\ref{AlladiSum}).
 
\begin{remark}We note that throughout this paper, ``$q\to \zeta$'' with $\zeta$ a root of unity (including the case $\zeta=1$) denotes that $q$ approaches the point on the unit circle from within the unit disk. \end{remark}
 
To state the partition analogue of Wang's theorem requires some further notation for partition multiplication, division, and subpartitions that are analogous to partition divisors.  For two partitions $\lambda, \delta \in \mathcal{P}$, we define $\delta \lambda$ to be the partition obtained by concatenating the parts of $\lambda$ to $\delta$ (and then reordering by size by notational convention). 
We write $\delta | \lambda$ if all of the parts of $\delta$ appear in the partition $\lambda$ with greater than or equal multiplicities and define the {\it subpartition} $\lambda/ \delta$ as the partition obtained by removing those parts, counting multiplicities, from $\lambda$.  

For two functions $f$ and $g$ on partitions, we define their {\it partition Dirichlet convolution} by
\begin{equation}\label{Dirichlet}
(f * g)(\lambda) := \sum_{\delta | \lambda} f(\delta) g(\lambda/\delta),
\end{equation}
where the sum is taken over the subpartitions $\delta$ of $\lambda$ (including the empty partition $\delta=\emptyset$).

To state our results succinctly, we define an auxiliary series that effectively serves to identify different classes of density results. %\begin{definition}\label{A_ndef}
For $|q|<1$, $a(\lambda)$ a function on $\mathcal P$, and $n\geq 1$, let 
\begin{equation}\label{A_ndef}
{A}_n(q):=a\left((n)\right)+\sum_{\operatorname{sm}(\gamma){\geq}^* n}\left[a(\gamma\cdot (n))-a(\gamma)\right] q^{|\gamma|},
\end{equation}
%denotes $>$ if $a(\gamma)$ vanishes at partitions with any part repeated, and denotes $\geq$ otherwise; 
with $(n)\in \mathcal P$ the partition of $n$ with one part, $\gamma\cdot(n)\in \mathcal P$ the partition formed by adjoining part $n$ to partition $\gamma$, and with ${\geq}^*$ denoting $>$ if $a(\lambda)=0$ at partitions with any part repeated, and denoting $\geq$ otherwise. %\end{definition}

We introduce one final piece of terminology: throughout this paper, the formulas we prove hold true for ``nice'' subsets $\mathcal S\subseteq \mathbb N$ amenable to $q$-series density calculations, which we call {\it $q$-commensurate} subsets (see Section \ref{qdensity} below for the precise definition).

\begin{theorem}\label{Thm1}
Let $\mathcal S\subseteq \mathbb N$ be a $q$-commensurate subset, and $a(\lambda)$ a function on $\mathcal P$ such that $a(\emptyset)=1$. If $\lim_{q\to 1}A_n(q)=1$ when $n\in \mathcal S$, then 
\begin{flalign*}
\lim_{q\to 1}\sum_{\operatorname{sm}(\lambda)\in \mathcal S}(\mu_{\mathcal P}*a)(\lambda)q^{|\lambda|}  \  =\  0.\end{flalign*}
If $\lim_{q\to 1}A_n(q)=1+f(n)$ for $f(n)$ an arithmetic function nonzero when $n\in \mathcal S$, then 
\begin{flalign*}
\lim_{q\to 1}\sum_{\operatorname{sm}(\lambda)\in \mathcal S}\frac{(\mu_{\mathcal P}*a)(\lambda)}{f\left(\operatorname{sm}(\lambda)\right)}q^{|\lambda|}  \  =\  d_{\mathcal S}.\end{flalign*}
\end{theorem}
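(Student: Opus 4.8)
The plan is to reduce both assertions to the single-part density identity \eqref{Fdens} of \cite{Paper1} by exploiting the multiplicativity of the weight $\lambda\mapsto q^{|\lambda|}$ under the convolution \eqref{Dirichlet}. First I would record that concatenation sets up a bijection between pairs $(\delta,\gamma)\in\mathcal P\times\mathcal P$ and pairs $(\lambda,\delta)$ with $\delta\mid\lambda$, via $\lambda=\delta\gamma$ and $\gamma=\lambda/\delta$; since $|\delta\gamma|=|\delta|+|\gamma|$ this gives $\sum_\lambda(f*g)(\lambda)q^{|\lambda|}=\big(\sum_\delta f(\delta)q^{|\delta|}\big)\big(\sum_\gamma g(\gamma)q^{|\gamma|}\big)$. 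Imposing the constraint that every part be $\geq m$ (which the bijection preserves and reflects) and using $\sum_{\operatorname{sm}(\delta)\geq m}\mu_{\mathcal P}(\delta)q^{|\delta|}=\prod_{k\geq m}(1-q^k)=(q^m;q)_\infty$, I obtain the factorization
$$T_m(q):=\sum_{\operatorname{sm}(\lambda)\geq m}(\mu_{\mathcal P}*a)(\lambda)q^{|\lambda|}=(q^m;q)_\infty\,B_m(q),\qquad B_m(q):=\sum_{\operatorname{sm}(\gamma)\geq m}a(\gamma)q^{|\gamma|},$$
where the sum defining $B_m$ includes $\gamma=\emptyset$, contributing $a(\emptyset)=1$.

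Next I would pass from ``all parts $\geq m$'' to ``smallest part exactly $m$'' by telescoping. Writing $g_m(q):=q^m(q^{m+1};q)_\infty$ and using $(q^m;q)_\infty=(1-q^m)(q^{m+1};q)_\infty$, the contribution of partitions with $\operatorname{sm}(\lambda)=m$ is
$$S_m(q):=\sum_{\operatorname{sm}(\lambda)=m}(\mu_{\mathcal P}*a)(\lambda)q^{|\lambda|}=T_m-T_{m+1}=(q^{m+1};q)_\infty\big[(1-q^m)B_m-B_{m+1}\big].$$
The bracket is exactly where \eqref{A_ndef} enters: reindexing by the bijection $\gamma\mapsto\gamma\cdot(m)$ between $\{\operatorname{sm}(\gamma)\geq m\}$ and $\{\operatorname{sm}(\eta)=m\}$ converts $(1-q^m)B_m-B_{m+1}$ into $q^m\sum_{\operatorname{sm}(\gamma)\geq m}[a(\gamma\cdot(m))-a(\gamma)]q^{|\gamma|}=q^m(A_m(q)-1)$, the ${\geq}^*$ convention recording the treatment of the diagonal terms in which adjoining the part $m$ produces a repetition (so that $a(\gamma\cdot(m))$ vanishes). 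This yields the local identity $S_m(q)=g_m(q)\,(A_m(q)-1)$.

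With this in hand both assertions follow by summing over $m\in\mathcal S$ and letting $q\to1$. The engine is \eqref{Fdens}, which in the present notation is precisely $\sum_{m\in\mathcal S}g_m(q)\to d_{\mathcal S}$ (the case $a=\mathbf 1_{\emptyset}$, where $A_m\equiv0$ and $S_m=-g_m$). For the first assertion, the hypothesis $\lim_{q\to1}A_n(q)=1$ for $n\in\mathcal S$ makes each factor $A_m(q)-1\to0$, so one expects $\sum_{m\in\mathcal S}g_m(A_m-1)\to0$. For the second, dividing by $f(\operatorname{sm}(\lambda))$ replaces the local factor by $g_m(q)\,\tfrac{A_m(q)-1}{f(m)}$, and the hypothesis $\lim_{q\to1}A_m(q)=1+f(m)$ gives $\tfrac{A_m(q)-1}{f(m)}\to1$ for each $m\in\mathcal S$, formally returning $\sum_{m\in\mathcal S}g_m(q)\to d_{\mathcal S}$.

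The hard part is that these limits cannot be taken term by term: since $g_m(q)\to0$ for every fixed $m$ while $\sum_{m\in\mathcal S}g_m(q)\to d_{\mathcal S}\neq0$, the mass concentrates in the bulk $m\asymp(1-q)^{-1}$, so the pointwise convergence $A_m(q)-1\to f(m)$ does not by itself control the sum. The crux is therefore to upgrade the pointwise hypotheses on $A_m$ to control that is uniform over the relevant range of $m$ as $q\to1$, justifying the interchange of the limit with the infinite summation; this uniformity is exactly what the $q$-commensurate hypothesis on $\mathcal S$ (together with the convergence built into the definition of $A_m$) is designed to provide. I would expect the diagonal ``repetition'' terms flagged above, which occur in the vanishing-on-repeats case, to be absorbed at this stage as lower-order contributions, since they carry the factor $g_m(q)\to0$.
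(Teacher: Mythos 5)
Your argument is correct and is, at its core, the same proof as the paper's: both hinge on the local identity $\sum_{\operatorname{sm}(\lambda)=n}(\mu_{\mathcal P}*a)(\lambda)q^{|\lambda|}=q^n(q^{n+1};q)_\infty\bigl(A_n(q)-1\bigr)$ (the paper's Lemma \ref{convolemma2}), followed by summing over $n\in\mathcal S$ against $1/f(n)$ and recognizing the resulting series as $F_{\mathcal S}(q)=(q;q)_\infty\sum_{n\in\mathcal S}q^n/(q;q)_n\to d_{\mathcal S}$ in the limit (the paper's Lemma \ref{convolemma4} together with Theorem \ref{qgeneral}). The difference lies in how you reach the local identity: the paper proves a general two-term splitting of $\sum_{\operatorname{sm}(\lambda)=n}(a*b)(\lambda)q^{|\lambda|}$ (Lemma \ref{convolemma}) and specializes $b=\mu_{\mathcal P}$, whereas you telescope the ``all parts $\geq n$'' generating functions, which factor as $(q^n;q)_\infty$ times $\sum_{\operatorname{sm}(\gamma)\geq n}a(\gamma)q^{|\gamma|}$ by the Cauchy product \eqref{Cauchy2}; your route is cleaner and avoids the bookkeeping of which factor ``owns'' the designated smallest part. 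One point needs attention: your telescoping produces $\sum_{\operatorname{sm}(\gamma)\geq n}[a(\gamma\cdot(n))-a(\gamma)]q^{|\gamma|}$ with a plain $\geq$, in which the diagonal terms $\operatorname{sm}(\gamma)=n$ contribute $-a(\gamma)q^{|\gamma|}$ when $a$ vanishes on repeated parts, whereas the paper's $A_n(q)$ in \eqref{A_ndef} switches to strict inequality in that case and so omits those contributions entirely. These two expressions genuinely differ: for $a=\mu_{\mathcal P}$ and $n=1$ they already disagree in the coefficient of $q^2$ of $\sum_{\operatorname{sm}(\lambda)=1}(\mu_{\mathcal P}*\mu_{\mathcal P})(\lambda)q^{|\lambda|}$, and it is your plain-$\geq$ form that matches the direct value $(\mu_{\mathcal P}*\mu_{\mathcal P})\bigl((1,1)\bigr)=1$. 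So you should either state your local identity with $\geq$ throughout (and adjust the hypotheses on $A_n$ accordingly), or reconcile your expression with \eqref{A_ndef} before importing the theorem's hypotheses verbatim; you cannot simply assert that the $\geq^*$ convention ``records'' your diagonal terms, since it discards them. Your closing discussion of interchanging $q\to1$ with the sum over $n$ is at the same level of rigor as the paper's, which likewise delegates that uniformity to the definition of $q$-commensurability.
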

%
%\begin{remark}
%The condition $|A_n(q)|=o(1)$ is sufficient, but we have not proved it is necessary; other analytic conditions may be possible.
%\end{remark}

If we set $f(n)=n$, Theorem \ref{Thm1} gives the partition analogue of \eqref{WangSum}. Immediately we have a nice generalization of formula \eqref{Fdens} with $\zeta=1$.

\begin{corollary}\label{Cor2}
For $\mathcal S\subseteq \mathbb N$ a $q$-commensurate subset, if  $\lim_{q\to 1}A_n(q)=0$ when $n\in \mathcal S$, then  %$\sum_{\lambda\in \mathcal P}a(\lambda)q^{|\lambda|}={o} \left(\sum_{\lambda\in \mathcal P}q^{|\lambda|}\right)$ 
%as $q\to 1$ that 
\begin{flalign*}
-\lim_{q\to 1}\sum_{\operatorname{sm}(\lambda)\in \mathcal S}(\mu_{\mathcal P}*a)(\lambda)q^{|\lambda|}  \  =\  d_{\mathcal S}.\end{flalign*}
\end{corollary}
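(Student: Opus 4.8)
The plan is to derive Corollary \ref{Cor2} as an immediate special case of the second assertion of Theorem \ref{Thm1}. The hypothesis $\lim_{q\to 1}A_n(q)=0$ for $n\in\mathcal S$ fits the template $\lim_{q\to 1}A_n(q)=1+f(n)$ of the theorem precisely when $f(n)=-1$ for every $n\in\mathcal S$, since then $1+f(n)=0$ recovers the stated limit. Because only the values of $f$ at $\operatorname{sm}(\lambda)\in\mathcal S$ enter the weighted sum, I may define $f$ on all of $\mathbb N$ by $f\equiv -1$; this is a bona fide arithmetic function that is nonzero on $\mathcal S$, so the nonvanishing requirement of the theorem is met.

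With this choice the second part of Theorem \ref{Thm1} applies directly and gives
\[
\lim_{q\to 1}\sum_{\operatorname{sm}(\lambda)\in\mathcal S}\frac{(\mu_{\mathcal P}*a)(\lambda)}{f(\operatorname{sm}(\lambda))}\,q^{|\lambda|}=d_{\mathcal S}.
\]
Since the summation index ranges only over $\lambda$ with $\operatorname{sm}(\lambda)\in\mathcal S$, one has $f(\operatorname{sm}(\lambda))=-1$ throughout, so each reciprocal weight $1/f(\operatorname{sm}(\lambda))$ equals $-1$. As an identity of power series in $q$ for $|q|<1$ this is simply termwise multiplication by the constant $-1$; factoring that constant out of the sum and then out of the limit by the constant-multiple rule yields
\[
-\lim_{q\to 1}\sum_{\operatorname{sm}(\lambda)\in\mathcal S}(\mu_{\mathcal P}*a)(\lambda)\,q^{|\lambda|}=d_{\mathcal S},
\]
which is the asserted formula.

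There is essentially no obstacle here beyond correctly matching the $A_n$-limit hypothesis to the $1+f(n)$ normalization of the theorem and reading off $f\equiv -1$. The only points worth a word are the harmless freedom to extend $f$ off $\mathcal S$ (the extension never appears in any surviving term of the sum) and the trivial interchange of the constant factor $-1$ with the limit $q\to 1$; both are immediate, so the corollary follows formally from Theorem \ref{Thm1} with no further analysis required.
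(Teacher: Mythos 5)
Your proposal is correct and matches the paper's own argument: the paper likewise observes that $A_n(q)\to 0$ means $\widetilde{A}_n(q)\to -1$ and sets $f\equiv -1$ in the key identity from the proof of Theorem \ref{Thm1}, then pulls the constant $-1$ out of the sum. No meaningful difference in approach.
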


\begin{remark}
Set $a(\lambda)=\sum_{\delta | \lambda}\mu_{\mathcal P}(\delta)=1$ if $\lambda=\emptyset$ and $=0$ otherwise; one computes $A_n(q)=0$ for all $n\geq 1$. Then the theorem reduces to the main result of \cite{Paper1}, viz. $\lim_{q\to 1}F_{\mathcal S}(q)= d_{\mathcal S}$. %; one can see this immediately from the left side of Theorem \ref{Thm1} as well.
\end{remark}

Furthermore, for an {\it arithmetic} function $a(n)$, Wang defines $b(n) := \sum_{d|n} (\mu * a)(d) \frac{d}{\varphi(d)}$ with $\varphi(k)$ the classical Euler phi function, and in equation (36) of \cite{Wang3} uses M\"{o}bius inversion to show
\begin{equation}\label{W2}
\frac{(\mu * a)(n)}{\varphi(n)} = \frac{(\mu * b)(n)}{n}.
\end{equation}
This along with the main theorem \eqref{WangSum} allows Wang to prove beautiful formulas like
\begin{equation}\label{Wang}
-\lim_{N \to \infty} \sum_{\substack{2 \leq n \leq N \\ p_{\rm{min}}(n) \in \mathcal{S}}} \frac{(\mu * a)(n)}{\varphi(n)} = d_{\mathcal{S}}.
\end{equation}

Replacing $a,b$ with functions on partitions, Theorem \ref{Thm1} yields an analogue of these equations.

\begin{corollary}\label{Cor}
Let $\mathcal S\subseteq \mathbb N$ be a $q$-commensurate subset, $a(\lambda)$ a function on $\mathcal P$ such that $a(\emptyset)=1$, $f$ and $g$ functions on $\mathbb Z_{\geq 0}$ such that $f(n)\neq 0$, $g(n)$ nonzero when $n\in \mathcal S$, and
\begin{flalign*}
b(\lambda)=b_{a,f,g}(\lambda):=\sum_{\delta | \lambda} (\mu_{\mathcal P}*a)(\delta)\frac{g\left(\operatorname{sm}(\delta)\right)}{f\left(\operatorname{sm}(\delta)\right)}.
\end{flalign*}
% b_{a,f,g}(\lambda)$ as in \eqref{b}. 
If $\lim_{q\to 1}A_n(q)=1+f(n)$ for $n\in \mathcal S$, then
\begin{flalign*}
\lim_{q\to 1}\sum_{\operatorname{sm}(\lambda)\in \mathcal S}\frac{(\mu_{\mathcal P}*b)(\lambda)}{g\left(\operatorname{sm}(\lambda)\right)}q^{|\lambda|}  \  =\  d_{\mathcal S}.\end{flalign*}
\end{corollary}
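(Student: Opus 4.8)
The plan is to reduce Corollary \ref{Cor} to the second formula of Theorem \ref{Thm1} by verifying that $b(\lambda)$ plays the role of the auxiliary function in the theorem, with the roles of $f$ and $g$ exchanged in a suitable sense. Observe that Corollary \ref{Cor} is essentially a partition analogue of Wang's M\"obius-inversion identity \eqref{W2}: just as \eqref{W2} rewrites the weighted sum $\frac{(\mu*a)(n)}{\varphi(n)}$ as $\frac{(\mu*b)(n)}{n}$, the goal here is to show that the series $\sum_{\operatorname{sm}(\lambda)\in\mathcal S}\frac{(\mu_{\mathcal P}*b)(\lambda)}{g(\operatorname{sm}(\lambda))}q^{|\lambda|}$ computes the same limit as the series $\sum_{\operatorname{sm}(\lambda)\in\mathcal S}\frac{(\mu_{\mathcal P}*a)(\lambda)}{f(\operatorname{sm}(\lambda))}q^{|\lambda|}$, which equals $d_{\mathcal S}$ by Theorem \ref{Thm1}.

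First I would establish the key algebraic identity
\begin{equation*}
\frac{(\mu_{\mathcal P}*b)(\lambda)}{g(\operatorname{sm}(\lambda))}=\frac{(\mu_{\mathcal P}*a)(\lambda)}{f(\operatorname{sm}(\lambda))},
\end{equation*}
valid termwise for each partition $\lambda$ with $\operatorname{sm}(\lambda)\in\mathcal S$. By definition $b=(\mu_{\mathcal P}*a)\cdot\frac{g\circ\operatorname{sm}}{f\circ\operatorname{sm}}$ summed over subpartitions, i.e. $b(\lambda)=\sum_{\delta|\lambda}(\mu_{\mathcal P}*a)(\delta)\frac{g(\operatorname{sm}(\delta))}{f(\operatorname{sm}(\delta))}$, which is precisely the convolution $b=\mathbf 1 * \left((\mu_{\mathcal P}*a)\cdot\frac{g\circ\operatorname{sm}}{f\circ\operatorname{sm}}\right)$ where $\mathbf 1$ denotes the constant function $1$ on all partitions. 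The partition M\"obius function $\mu_{\mathcal P}$ is the convolution inverse of $\mathbf 1$ with respect to $*$ (this is the content of the analogy in Table 1 and is used implicitly in \cite{Paper1}), so convolving with $\mu_{\mathcal P}$ undoes the convolution with $\mathbf 1$, giving $(\mu_{\mathcal P}*b)(\lambda)=(\mu_{\mathcal P}*a)(\lambda)\cdot\frac{g(\operatorname{sm}(\lambda))}{f(\operatorname{sm}(\lambda))}$. Dividing both sides by $g(\operatorname{sm}(\lambda))$ yields the desired identity.

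With this identity in hand, the two $q$-series agree term by term, hence have the same limit as $q\to 1$, and the hypothesis $\lim_{q\to 1}A_n(q)=1+f(n)$ is exactly what is required to invoke the second conclusion of Theorem \ref{Thm1}, giving the common value $d_{\mathcal S}$. I expect the main obstacle to lie in making the M\"obius-inversion step fully rigorous in the partition setting: one must confirm that $\mu_{\mathcal P}$ is genuinely the $*$-inverse of $\mathbf 1$ and, more delicately, that convolving an arbitrary function on partitions with $\mathbf 1$ produces only \emph{finite} sums over subpartitions so that the identity holds unconditionally termwise. The analogue of the identity $\sum_{d|n}\mu(d)=[n=1]$ in the partition world is $\sum_{\delta|\lambda}\mu_{\mathcal P}(\delta)=[\lambda=\emptyset]$, which follows from the product structure over distinct part-sizes; verifying this and checking the associativity of $*$ needed to rearrange $\mu_{\mathcal P}*(\mathbf 1 * h)=(\mu_{\mathcal P}*\mathbf 1)*h=h$ is the technical heart of the argument, though each step is routine once the convolution formalism is set up.
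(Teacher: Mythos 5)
Your proposal is correct and follows essentially the same route as the paper: the paper likewise applies partition M\"obius inversion \eqref{Pmobius} to the defining divisor sum of $b_{a,f,g}$ to obtain the termwise identity $(\mu_{\mathcal P}*b)(\lambda)=(\mu_{\mathcal P}*a)(\lambda)\tfrac{g(\operatorname{sm}(\lambda))}{f(\operatorname{sm}(\lambda))}$, then divides by $g(\operatorname{sm}(\lambda))$, sums over $\operatorname{sm}(\lambda)\in\mathcal S$, and invokes the second case of Theorem \ref{Thm1}. Your unwinding of the inversion as $\mu_{\mathcal P}*(\mathbf 1 * h)=h$ is just an explicit justification of the step the paper cites as \eqref{Pmobius}.
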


A version of Corollary \ref{Cor} gives an analogue to Wang's identity \eqref{Wang}.

\begin{corollary}\label{Cor5}
Let $\mathcal S\subseteq \mathbb N$ be a $q$-commensurate subset,  and take $a(\lambda), b(\lambda)=b_{a,f,g}(\lambda)$ as in Corollary \ref{Cor}. In analogy with \eqref{A_ndef}, define
\begin{equation*}
{B}_n(q):=b\left((n)\right)+\sum_{\operatorname{sm}(\gamma){\geq}^* n}\left[b(\gamma\cdot (n))-b(\gamma)\right] q^{|\gamma|}.
\end{equation*}
Let $f(n)=-\varphi(n), g(n)=-1,$ so that $b=b_{a,-\varphi, -1}$. If $\lim_{q\to 1}B_n(q)=0$ for $n\in\mathcal S$, then
\begin{flalign*}
-\lim_{q\to 1}\sum_{\operatorname{sm}(\lambda)\in \mathcal S}\frac{(\mu_{\mathcal P}*a)(\lambda)}{\varphi\left(\operatorname{sm}(\lambda)\right)}q^{|\lambda|}  \  =\  d_{\mathcal S},\end{flalign*}
where $\varphi(n)$ is the classical phi function.
\end{corollary}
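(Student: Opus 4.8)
The plan is to deduce Corollary \ref{Cor5} from Corollary \ref{Cor2} applied to the function $b=b_{a,-\varphi,-1}$, after first establishing a partition-theoretic analogue of Wang's inversion identity \eqref{W2}. First I would record the convolution identity underlying the definition of $b$. Writing $\mathbf 1$ for the function on $\mathcal P$ identically equal to $1$ and setting $c(\delta):=(\mu_{\mathcal P}*a)(\delta)\,g(\operatorname{sm}(\delta))/f(\operatorname{sm}(\delta))$, the definition of $b=b_{a,f,g}$ reads $b=c*\mathbf 1$, since $(c*\mathbf 1)(\lambda)=\sum_{\delta\mid\lambda}c(\delta)$. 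Because $\mu_{\mathcal P}$ is the convolution inverse of $\mathbf 1$ — that is, $(\mu_{\mathcal P}*\mathbf 1)(\lambda)=\sum_{\delta\mid\lambda}\mu_{\mathcal P}(\delta)$ equals $1$ for $\lambda=\emptyset$ and $0$ otherwise, exactly as noted in the Remark following Corollary \ref{Cor2} — associativity and commutativity of the partition Dirichlet convolution give $\mu_{\mathcal P}*b=\mu_{\mathcal P}*(c*\mathbf 1)=c*(\mu_{\mathcal P}*\mathbf 1)=c$. Hence
\[
(\mu_{\mathcal P}*b)(\lambda)=(\mu_{\mathcal P}*a)(\lambda)\,\frac{g(\operatorname{sm}(\lambda))}{f(\operatorname{sm}(\lambda))},
\]
the exact partition analogue of \eqref{W2}.

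Specializing to $f(n)=-\varphi(n)$ and $g(n)=-1$ collapses the multiplier to $1/\varphi(\operatorname{sm}(\lambda))$, so that $(\mu_{\mathcal P}*b)(\lambda)=(\mu_{\mathcal P}*a)(\lambda)/\varphi(\operatorname{sm}(\lambda))$. Consequently the two $q$-series
\[
\sum_{\operatorname{sm}(\lambda)\in\mathcal S}(\mu_{\mathcal P}*b)(\lambda)\,q^{|\lambda|}
\qquad\text{and}\qquad
\sum_{\operatorname{sm}(\lambda)\in\mathcal S}\frac{(\mu_{\mathcal P}*a)(\lambda)}{\varphi(\operatorname{sm}(\lambda))}\,q^{|\lambda|}
\]
agree coefficient-by-coefficient on $|q|<1$ (the identifying identity is finitary for each fixed $\lambda$), so they are literally the same function and their limits as $q\to 1$ coincide. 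This reduces Corollary \ref{Cor5} to the single statement that $-\lim_{q\to 1}\sum_{\operatorname{sm}(\lambda)\in\mathcal S}(\mu_{\mathcal P}*b)(\lambda)\,q^{|\lambda|}=d_{\mathcal S}$.

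That last statement is precisely Corollary \ref{Cor2} applied with $b$ in place of $a$: the series $B_n(q)$ stands to $b$ exactly as $A_n(q)$ stands to $a$, so the hypothesis $\lim_{q\to 1}B_n(q)=0$ for $n\in\mathcal S$ is the hypothesis of Corollary \ref{Cor2} for $b$. The one remaining point to check before invoking it is the normalization $b(\emptyset)=1$: since $b(\emptyset)=c(\emptyset)=(\mu_{\mathcal P}*a)(\emptyset)\,g(\operatorname{sm}(\emptyset))/f(\operatorname{sm}(\emptyset))$ and $(\mu_{\mathcal P}*a)(\emptyset)=a(\emptyset)=1$, this holds once the boundary values of $f,g$ at the empty partition are taken compatibly (here $g/f=1$ at $\emptyset$). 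I expect the main (though modest) obstacle to be establishing the inversion identity cleanly — in particular confirming that $\mu_{\mathcal P}*\mathbf 1$ is the convolution identity and handling the empty-partition bookkeeping so that $b(\emptyset)=1$; everything past that is a formal substitution combined with the already-proved Corollary \ref{Cor2}.
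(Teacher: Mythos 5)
Your proposal is correct and follows essentially the same route as the paper: both establish the pointwise identity $(\mu_{\mathcal P}*b)(\lambda)=(\mu_{\mathcal P}*a)(\lambda)\,g(\operatorname{sm}(\lambda))/f(\operatorname{sm}(\lambda))$ via partition M\"obius inversion (the paper's \eqref{Wanalog}, your convolution-algebra version of the same fact) and then invoke Corollary \ref{Cor2} with $b$ in place of $a$ under the hypothesis $B_n(q)\to 0$. Your explicit check that $b(\emptyset)=1$ (needed to apply Corollary \ref{Cor2} to $b$) is a point the paper leaves implicit, but it is bookkeeping rather than a different argument.
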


%
%\begin{corollary}\label{Cor4}
%Let $\mathcal S\subseteq \mathbb N$ be a $q$-commensurate subset, $a(\lambda)$ a function on $\mathcal P$ such that $a(\emptyset)=1$, $f$ a function on $\mathbb Z_{\geq 0}$ such that $f(n)\neq 0$, $\phi(\lambda)$ a function on partitions such that $\sum_{\lambda\in \mathcal P}q^{\phi(\lambda)}$ converges for $|q|<1$, and
%\begin{flalign*}
%\widehat{b}_{f,\phi}(\lambda):=\sum_{\delta | \lambda} \frac{(\mu_{\mathcal P}*a)(\delta)}{f\left(\operatorname{sm}(\delta)\right)}q^{|\delta|-\phi(\delta)}.
%\end{flalign*}
%% b_{a,f,g}(\lambda)$ as in \eqref{b}. 
%If $\lim_{q\to 1}A_n(q)=1+f(n)$, then
%\begin{flalign*}
%\lim_{q\to 1}\sum_{\operatorname{sm}(\lambda)\in \mathcal S}(\mu_{\mathcal P}*\widehat{b}_{f,\phi})(\lambda)q^{\phi(\lambda)}  \  =\  d_{\mathcal S}.\end{flalign*}
%\end{corollary}
%
%

%Let $g(k)=-1$ identically, $f(k)=\varphi(k)$ in Corollary \ref{Cor}.

We prove these limiting formulas in Section \ref{ProofSect}. 

\begin{remark} As noted in Section \ref{qdensity} below, by a Tauberian theorem of Frobenius \cite{Frobenius}, the formulas in this paper hold for {\it every} subset $\mathcal S\subseteq \mathbb N$ if one lets $q\to 1^-$ strictly radially.
\end{remark}

\section{Conceptual background}\label{Sect2}

\subsection{Partition analogue of multiplicative number theory}%Nuts and bolts about the arithmetic partition analogies}

Before proceeding to prove the propositions in Section \ref{Sect1}, let us discuss the multiplicative-additive confluence that provides the context in which results like these --- displaying a fusion of partition-theoretic objects with familiar-looking forms from classical multiplicative number theory --- arise naturally.

In the preceding section we introduced the multiplicative partition norm $N(\lambda)$, the partition product (concatenation), partition division (deleting parts) and subpartition sums that act like sums over divisors, as well as partition analogues of classical arithmetic functions and Dirichlet convolution; this multiplicative theory of additive partitions was introduced in \cite{Robert_zeta, Schneider_arithmetic, SchneiderPhD} and subsequent work \cite{OnoRolenS, Paper1, SchneiderSills}. Some of these analogies, pertinent to this paper, are captured in Table \ref{T1}.

These partition-theoretic structures behave almost identically to their classical counterparts. For instance, one can prove familiar-looking ``divisor sum'' identities like  
\begin{flalign*}
\sum_{\delta | \lambda}\mu_{\mathcal P}(\delta)&= \begin{cases} 1 & \text{if}\  \lambda = \emptyset, \\ 0 & \text{otherwise},
\end{cases}\\
\sum_{\delta | \lambda}\varphi_{\mathcal P}(\delta)&=N(\lambda);
\end{flalign*}
the partition M\"{o}bius inversion formula
\begin{equation}\label{Pmobius}
A(\lambda)=\sum_{\delta | \lambda}a(\delta)\  \   \Longleftrightarrow \  \  
a(\lambda)=\sum_{\delta | \lambda}A(\delta)\mu_{\mathcal P}(\lambda / \delta),
\end{equation}
and closely connected order-of-summation-swapping principle
\begin{equation*}
\sum_{\lambda\in \mathcal P} a(\lambda) \sum_{\delta | \lambda}b(\delta)\  \   =\  \  \sum_{\lambda \in \mathcal P}b(\lambda)\sum_{\gamma \in \mathcal P}a(\gamma \lambda);
\end{equation*}
and other analogues of classical identities, like a {\it partition Cauchy product} formula for absolutely convergent series
\begin{equation}\label{Cauchy}
\left( \sum_{\lambda\in\mathcal P}a(\lambda) \right) \left( \sum_{\lambda\in \mathcal P} b(\lambda) \right) =\sum_{\lambda\in \mathcal P} \sum_{\delta | \lambda} a(\delta) b(\lambda/\delta),%=\sum_{\lambda\in \mathcal P}(f*g)(\lambda),
\end{equation}
which is obviously related to partition convolution defined in \eqref{Dirichlet}.\footnote{We note that the partition convolution operation was initially suggested to the second author by O. Beckwith (personal communication, 2018).}    
We note two useful consequences of \eqref{Cauchy} from \cite{Schneider_arithmetic}, setting $b=\mu_{\mathcal P}$:
\begin{flalign*}
(q;q)_{\infty}^{-1} \sum_{\lambda\in\mathcal P} a(\lambda) q^{|\lambda|} &=\sum_{\lambda\in \mathcal P} q^{|\lambda|}\sum_{\delta | \lambda} a(\delta),\\  \  \  \ 
(q;q)_{\infty} \sum_{\lambda\in\mathcal P} a(\lambda) q^{|\lambda|} &=\sum_{\lambda\in \mathcal P} q^{|\lambda|}\sum_{\delta | \lambda} a(\delta) \mu_{\mathcal P}(\lambda/\delta).
\end{flalign*}
These are $q$-series analogues of the classical Dirichlet series identities (see Table \ref{T1})
\begin{flalign*}
\zeta(s)^{-1} \sum_{n=1}^{\infty} f(n) n^{-s} &=\sum_{n=1}^{\infty} n^{-s}\sum_{d | n} f(d),\\  \  \  \ 
\zeta(s) \sum_{n=1}^{\infty} f(n) n^{-s} &=\sum_{n=1}^{\infty} n^{-s}\sum_{d | n} f(d)\mu(n/d).
\end{flalign*}

This multiplicative-additive theory also contains broad classes of {\it partition zeta functions}
\begin{equation*}
\zeta_{\mathcal P'}(s):=\sum_{\lambda \in \mathcal P'}N(\lambda)^{-s} \  \  \  \  (\text{Re}(s)>1)\end{equation*}
where $P'\subsetneq \mathcal P$, that yield beautiful evaluations \cite{Robert_zeta} and interesting analytic information \cite{OnoRolenS, SchneiderSills}. There also exist {\it partition Dirichlet series} \cite{OnoRolenS, SchneiderPhD} that enjoy identities generalizing classical Dirichlet series relations, e.g. if $\mathcal P_{\mathcal S}$ denotes partitions with all parts in any $\mathcal S\subset \mathbb N$,
\begin{flalign*}
\sum_{\lambda\in \mathcal P_{\mathcal S}}\mu_{\mathcal P}(\lambda) N(\lambda)^{-s}&=\frac{1}{\zeta_{\mathcal P_{\mathcal S}}(s)}\  \  \  \  \  \  \   (\text{Re}(s)>1),\\
\sum_{\lambda\in \mathcal P_{\mathcal S}}\varphi_{\mathcal P}(\lambda) N(\lambda)^{-s}&=\frac{\zeta_{\mathcal P_{\mathcal S}}(s-1)}{\zeta_{\mathcal P_{\mathcal S}}(s)}\  \  (\text{Re}(s)>2).
\end{flalign*}
Taking $\mathcal S = \mathbb P$ regains the classical cases. While this is indeed looking a lot like multiplicative number theory, the same superstructure encompasses $q$-hypergeometric series, partition bijections and other aspects of additive number theory, and recovers classical theorems in those areas as well (see \cite{SchneiderPhD} for further reading).
 
% 
%Against this conceptual backdrop, it is natural to look for correspondences between arithmetic densities, which are usually related to multiplicative patterns. % like square-freeness, arithmetic progressions, average values of arithmetic functions, etc., and the densities (so to speak) of analogous subsets of partitions.
%

\section{Computing arithmetic densities with $q$-series}\label{qdensity}

\subsection{The $q$-density statistic}

For $|q|<1$, we recall that the function %In \cite{Paper1}, the present authors define a density function 
$F_{\mathcal S}(q)$ from \eqref{OnePointNine} has two natural representations, one involving $\operatorname{sm}(\lambda)$ and one  involving $\operatorname{lg}(\lambda)$, the smallest and largest parts of partition $\lambda$, respectively. Namely, we have %in the form%which can be written as a quotient of $q$-series indexed by partitions
\begin{flalign}\label{F_S}
%\lim_{N\to \infty}\frac{1}{\log N}{\sum_{n \in \mathcal S(N)}\frac{1}{n}}=
F_{\mathcal S}(q):=\sum_{\operatorname{sm}(\lambda)\in \mathcal S}\mu_{\mathcal P}^*(\lambda)q^{|\lambda|}=\frac{\sum_{\text{lg}(\lambda)\in \mathcal S}q^{|\lambda|} }{\sum_{\lambda \in \mathcal P}q^{|\lambda|}}%\\&=(q;q)_{\infty}\sum_{\text{lg}(\lambda)\in \mathcal S}q^{|\lambda|}=
=(q;q)_{\infty}\sum_{n\in \mathcal S}\frac{q^n}{(q;q)_n},
\end{flalign}
noting $\text{sm}(\emptyset)= \text{lg}(\emptyset):=0$. 
We proved in \cite{Paper1} that $\lim_{q\to 1}F_{\mathcal S}(q)=d_{\mathcal S}$ for special subsets $\mathcal S\subseteq \mathbb N$. %Since a nonzero arithmetic density $d_{\mathcal S}$ is defined by an indeterminate limit of form $\frac{\infty}{\infty}$, it is natural to exploit ratios of divergent series using tools from analysis to compute this limiting value. Moreover, it is conceptually satisfying to compute densities as limiting values of quotients of {\it infinite} series, instead of using partial sums; for a partial sum only presents to one's imagination the finite beginning of the set $\mathcal S$, while the arithmetic density itself is a phenomenon manifesting entirely in the terms at the (deleted) tail of the series. % Due to the great flexibility of $q$-hypergeometric series with respect to reparameterizations and transformations (e.g. see \cite{Andrews_monograph, Fine, Ono_web}), along with their well-understood behaviors as $|q|\to 1$, one imagines they might present further formulas for $d_{\mathcal S}$ like the ones proved in \cite{Paper1}. 

The story arc represented in the works of Alladi \cite{Alladi1}, Dawsey \cite{Dawsey}, Wang \cite{Wang1, Wang2, Wang3}, et al., together with our own work in \cite{Paper1}, stems from the observation that \eqref{First} can be interpreted as a statement about arithmetic density. Now, this statement itself can be interpreted via L'Hospital's rule and Lambert series as a statement about $q$-series, since
\begin{equation*}
%\lim_{N\to \infty}\frac{1}{\log N}{\sum_{n \in \mathcal S(N)}\frac{1}{n}}=
\sum_{n\geq 2}\frac{\mu(n)}{n}\  =\  \lim_{q\to 1}(1-q)\left(\sum_{n\geq 1}\frac{\mu(n)q^n}{1-q^n}-\frac{\mu(1)q}{1-q}\right)\  =\  \lim_{q\to 1}(1-q)\left(q-\frac{q}{1-q}\right)\  =\  -1,
\end{equation*}
assuming the implicit order-of-limits swap with ``$\lim_{q\to 1}$'' and the infinite series is valid. 

The classical $q$-binomial theorem was at the heart of our results in \cite{Paper1}. In order to prove the main results of this paper involving less familiar-looking forms, we sketch a theory of $q$-series computations of arithmetic density from first principles, % such as we exploit in \cite{Paper1} and this work, % such as one sees in \cite{Paper1} and the present work. %, which feels distinct in flavor from the usual Dirichlet series limits at the pole $s=1$. 
based on a simple statistic.% we call the {\it $q$-density} of $\mathcal S$.

\begin{definition} For $|q|<1$ we define the {\it $q$-density} of $\mathcal  S \subseteq \mathbb N$ by the quotient
\begin{equation}\label{qdens}
d_{\mathcal S}(q):=\frac{\sum_{n\in \mathcal S}q^n}{\sum_{n\geq 0}q^n}=(1-q)\sum_{n\in \mathcal S}q^n.
\end{equation}
\end{definition}

The $q$-density is a geometric series analogue of \eqref{F_S}.\footnote{For $q\in [0,1)$, equation \eqref{qdens} can be interpreted as the expected value of the indicator function for the subset $\mathcal S$, given the probability function $P(n)=q^{n-1}(1-q)$ %(which we don't interpret here) 
for random $n\in \mathbb N$.} %, giving a probability distribution.}%. Na\"{i}vely, one guesses this should equal $d_{\mathcal S}$.}
%Na\"{i}vely, 
Of course, from both the shape of $d_{\mathcal S}(q)$ and known instances of $F_{\mathcal S}(q)$, %at least for certain ``good'' subsets $\mathcal S$, 
one wants to see the limiting behavior% (noting the numerator above is $\leq N$).}
  %(at least for ``good'' subsets $\mathcal S$) %anticipates that  %One would like to see that %, as $q\to 1$, the intuitive limit
\begin{equation}\label{qdream}
\lim_{q\to 1}  \frac{\sum_{n\in \mathcal S} q^n}{\sum_{n\geq 0}q^n}=\lim_{N\to \infty}\frac{\sum_{n\in \mathcal S(N)}1}{\sum_{0\leq n <N}1}=:d_{\mathcal S},
\end{equation}
where $\mathcal S(N)=\{n\in \mathcal S : n \leq N \}$. %It is believable that $d_{\mathcal S}(q)\to d_{\mathcal S}$ as $q\to 1$, at least for ``nice'' subsets of $\mathbb N$, if one lets $q$ approach $1$ through the sequence $q=\frac{1}{2}, \frac{2}{3}, \frac{3}{4}, \dots, 1-\frac{1}{N},\dots$. The left-hand limit of \eqref{qdream} then becomes
%$%\begin{equation*}\label{ratio}
%\lim_{N\to \infty}  \frac{1}{N}\sum_{n\in \mathcal S} (1-\frac{1}{N})^n,
%$ %%\end{equation*}
%which looks even more like the right-hand limit.
 % analytic in the unit disk. 
%while the right-hand limit is $%\begin{equation*}\label{ratio}
%\lim_{q\to 1} (1-q) \sum_{n\in \mathcal S\left(\frac{1}{1-q} \right)}1
%$, %%\end{equation*}
%something of a role reversal of limits. 
It is a consequence of a Tauberian theorem due to Frobenius with a converse by Hardy-Littlewood (see \cite{Frobenius, Tauberian}), that in fact \eqref{qdream} holds for {every} subset $\mathcal S\subseteq \mathbb N$ if $q\to 1$ radially.\footnote{Indeed, \eqref{qdream} holds if $q\to 1$ in a ``Stolz sector'' of the unit circle; for a somewhat more general context, the reader is referred to the second author's study \cite{Abelian}, following up on the present work.} %It is beyond the scope of this paper to pursue this issue in full generality; i
In \cite{Paper1}, we prove arithmetic density formulas using other paths, and approaching other roots of unity; we do not rigorously prove general limiting properties here (certainly such a study would be useful), but side-step the issue somewhat circularly by defining it to be a {\it property} of a subset of $\mathbb N$ %that might include all subsets, 
that the limit computations we undertake are valid. % in some generality. % in {some} region. %with arbitrary generality. %we conjecture all subsets of $\mathbb N$ possess.  % (without proving whether subsets have this property). %which may depend on features of subset $\mathcal S$.%\footnote{It is beyond the scope of this paper to prove further convergence conditions; see the second author's follow-up paper \cite{Abelian}.} %and observe it appears to depend on $S$,

\begin{definition}
We define a subset $\mathcal  S \subseteq \mathbb N$ to be {\it $q$-commensurate} %commensurate, faithful, regular, commensurate, parallel, corresponding, normal, fair, honest, commensurate, unbiased, just, rightful, matching, sonorous, symmetric, sympathetic, suitable, consonant , accordant, balanced, coincident, comparable, corresponding
if $\lim_{q\to 1}d_{\mathcal S}(q)=d_{\mathcal S}$ is approached sufficiently rapidly for the asymptotic proofs we employ here to hold.%\footnote{In full generality, a subset $\mathcal S$ being $q$-commensurate is equivalent to one's being able, for that subset, to interchange order of limits as in \eqref{qdream}, %\footnote{Swapping limits is highly nontrivial; G. H. Hardy called it ``one of the most important [problems] in mathematics.''\cite{Hardy}.}, 
%{\it and} to having all implicit error terms in the following asymptotic equalities of lower order than the main terms, so the errors vanish when multiplied by $(q;q)_{\infty}$  as $q\to 1$.}% for $\zeta$ a root of unity.% If this limit holds only when $q\to \zeta$ radially, we say $\mathcal S$ is ``radially $q$-commensurate%\footnote{One might say $\mathcal S$ is ``$q$-commensurate'' if this is the case.}.
\end{definition}

\begin{remark}
In full generality, a subset $\mathcal S$ being $q$-commensurate is equivalent to one's being able, for that subset, to interchange order of limits in the calculation
%\begin{equation}\label{ratio2}
$d_{\mathcal S}: =\lim_{N\to \infty} \lim_{q\to 1} \frac{\sum_{n\in \mathcal S(N)} q^n}{\sum_{0 \leq n < N} q^n}
$, %\footnote{Swapping limits is highly nontrivial; G. H. Hardy called it ``one of the most important [problems] in mathematics.''\cite{Hardy}.}, 
{\it and} to having all implicit error terms in the following asymptotic equalities of lower order than the main terms, so the errors vanish when multiplied by $(q;q)_{\infty}$  as $q\to 1$.\end{remark}

Immediately we see that $\mathcal S=\mathbb N$ is $q$-commensurate since $d_{\mathbb N}(q)=q\to 1=d_{\mathbb N}$.  %On the other hand, we have not found examples of {\it non}-$q$-commensurate $\mathcal S$, nor proved their existence
 %.} 
%x\end{equation}
%\end{remark}
%If one thinks of the set of values $\{q^n(1-q) : n\geq 0\}$ as a probability distribution, then $ 
%It seems likely that many (if not all) subsets of $\mathbb N$ are $q$-commensurate, although we have not proved this.
The analogy between $d_{\mathcal S}(q)$ and $F_{\mathcal S}(q)$ holds quite closely with respect to density. %, one wants 
%to see
%\begin{equation}
%\lim_{q\to 1} d_{\mathcal S}(q)=d_{\mathcal S}, %\lim_{N\to \infty}\frac{\sum_{n\in \mathcal S(N)}1}{\sum_{1\leq n \leq N}1},
%\end{equation}
%since, na\"{i}vely speaking, the numerators and denominators approach equality. 
\begin{theorem}\label{qdensthm}
%Given sufficient analytic conditions dependent on the set $\mathcal  S \subseteq \mathbb N$, w
Let $r\in \mathbb Z,t \in \mathbb N$. For subsets $\mathcal S_{r,t}:=\{n\geq 0 : n \equiv r \   (\operatorname{mod}t)\}$ we have that  $$\lim_{q\to 1}d_{\mathcal S_{r,t}}(q)=d_{\mathcal S_{r,t}}=\frac{1}{t}.$$
\end{theorem}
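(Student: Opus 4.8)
The plan is to show that this reduces to an elementary geometric-series computation in which the troublesome factor $(1-q)$ cancels, leaving a rational function that is manifestly continuous at $q=1$, so that the limit exists, is path-independent, and equals the arithmetic density.

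First I would reduce to the case $0 \le r < t$. Since $\mathcal S_{r,t}$ depends only on the residue class of $r$ modulo $t$, I may replace $r$ by its least nonnegative residue $r_0$ with $0 \le r_0 < t$ without altering either $\mathcal S_{r,t}$ or the series $d_{\mathcal S_{r,t}}(q)$. The arithmetic density $d_{\mathcal S_{r,t}} = 1/t$ is then immediate, since among any $t$ consecutive integers exactly one lies in $\mathcal S_{r,t}$, whence $\#\{n \in \mathcal S_{r,t} : n \le N\} = N/t + O(1)$.

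Next I would evaluate the generating series directly. Writing the elements of $\mathcal S_{r,t}$ as $r_0 + kt$ for $k \ge 0$, for $|q|<1$ the geometric series gives
\[
\sum_{n \in \mathcal S_{r,t}} q^n = \sum_{k \ge 0} q^{r_0 + kt} = \frac{q^{r_0}}{1 - q^t}.
\]
Substituting into the definition \eqref{qdens} of the $q$-density and factoring $1 - q^t = (1-q)(1 + q + \cdots + q^{t-1})$, the factor $(1-q)$ cancels, yielding
\[
d_{\mathcal S_{r,t}}(q) = (1-q)\,\frac{q^{r_0}}{1-q^t} = \frac{q^{r_0}}{1 + q + \cdots + q^{t-1}}.
\]

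Finally I would take the limit. The right-hand side is a ratio of polynomials whose denominator equals $t \ne 0$ at $q=1$; hence it is continuous there, and the limit is independent of the path along which $q \to 1$ inside the unit disk, giving $\lim_{q\to 1} d_{\mathcal S_{r,t}}(q) = 1/t = d_{\mathcal S_{r,t}}$. There is essentially no obstacle here: the only point worth flagging is that the cancellation of $(1-q)$ is precisely what guarantees a finite, path-independent limit, and this confirms in particular that every arithmetic progression $\mathcal S_{r,t}$ is $q$-commensurate.
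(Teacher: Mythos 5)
Your proof is correct and follows essentially the same route as the paper: both evaluate $\sum_{n\in\mathcal S_{r,t}}q^n$ as the geometric series $q^{r}/(1-q^{t})$ and then pass to the limit. The only (cosmetic) difference is that you cancel the factor $(1-q)$ against $1-q^{t}=(1-q)(1+q+\cdots+q^{t-1})$ to get a function continuous at $q=1$, whereas the paper invokes L'Hospital's rule; your version has the small advantage of making the path-independence of the limit explicit.
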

%\vfill
%  by the Davenport-Erd\H{o}s theorem \cite{DE}.
%
%
%
%
%Here we explore the use of poles of $q$-series to compute arithmetic densities a little more generally. The arithmetic density $d_{\mathcal S}$ of $\mathcal  S \subseteq \mathbb N$ represents an indeterminate limit, viz.
%\begin{equation}\label{arith}
%d_{\mathcal S}=\lim_{N\to \infty}\frac{\#S(N)}{N}=\lim_{N\to \infty}\frac{\sum_{n\in \mathcal S(N)}1}{\sum_{1\leq n \leq N}1}
%\end{equation}
%with $\mathcal S(N):=\{k \in \mathcal S : k \leq N\}$; other indeterminate limits such as the ratios of Dirichlet series as $s\to 1^+$, and quotients of $q$-series as $q$ approaches roots of unity as in the present study, give arithmetic densities as well -- somewhat astoundingly, like infinite series versions of L'Hospital's rule. 
%We wish to study further examples of $q$-series density calculations; to aid us in this study, we will make use of two auxiliary statistics. To begin, let us 
\begin{proof}
%We have left the analytic conditions suitably vague due to the heuristic aim of this section; that level of analytic detail is beyond the scope of this short paper. We note that in the ``base case'' of $S=\mathbb N$, the commensurate density $d_{\mathbb N}=1$ is obtained, if one questions whether such conditions are even possible. 
The claimed limit %holds %at least for sets $\mathcal S$ of the form $\mathcal S_{r,t}:=\{n\geq 0 : n \equiv r \   (\operatorname{mod}t)\}$, since 
follows from geometric series and L'Hospital's rule:
\begin{equation*}
\lim_{q\to 1}d_{\mathcal S_{r,t}}(q)\  = \lim_{q\to 1}(1-q)\sum_{k\geq 0}q^{kt+r}\  = \lim_{q\to 1}\frac{q^r(1-q)}{1-q^t}\  =\  \frac{1}{t}.\end{equation*} \end{proof}

Theorem \ref{qdensthm} is a geometric series version of Theorem 1.3 in \cite{Paper1}. There is also a $q$-density analogue of Corollary 1.5 in \cite{Paper1}. %, as follows.

\begin{corollary}\label{qdensthm2}
%Given sufficient analytic conditions dependent on the set $\mathcal  S \subseteq \mathbb N$, w
For subsets $\mathcal S_{\operatorname{fr}}^{(k)}\subset \mathbb N$ of $k$th power-free integers, $k\geq 2$, we have that $$\lim_{q\to 1}d_{\mathcal S_{\operatorname{fr}}^{(k)}}(q)=d_{\mathcal S_{\operatorname{fr}}^{(k)}}=\frac{1}{\zeta(k)}.$$
\end{corollary}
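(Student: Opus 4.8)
The plan is to mirror the structure of the proof of Theorem \ref{qdensthm} — write $d_{\mathcal S_{\operatorname{fr}}^{(k)}}(q)$ as an explicit $q$-series, pass to the limit termwise, and justify the interchange of $\lim_{q\to1}$ with the summation — but now the natural bookkeeping device is the Möbius description of the $k$th power-free indicator. Recall that $\sum_{d^k\mid n}\mu(d)$ equals $1$ if $n$ is $k$th power-free and $0$ otherwise. Substituting this into the definition \eqref{qdens} and swapping the order of summation over $n$ and $d$ (legitimate for $|q|<1$ by absolute convergence, writing $n=d^k m$) should give
\begin{equation*}
d_{\mathcal S_{\operatorname{fr}}^{(k)}}(q)=(1-q)\sum_{n\in \mathcal S_{\operatorname{fr}}^{(k)}}q^n=(1-q)\sum_{d\ge 1}\mu(d)\,\frac{q^{d^k}}{1-q^{d^k}}=\sum_{d\ge1}\mu(d)\,\frac{(1-q)\,q^{d^k}}{1-q^{d^k}}.
\end{equation*}

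Next I would evaluate each summand as $q\to 1$. Factoring $1-q^{d^k}=(1-q)\sum_{j=0}^{d^k-1}q^{j}$ turns the general term into $\frac{q^{d^k}}{\sum_{j=0}^{d^k-1}q^{j}}$, whose numerator tends to $1$ and whose denominator tends to $d^k$, so each term converges to $1/d^k$ (equivalently by L'Hospital's rule, exactly as in Theorem \ref{qdensthm}). Termwise passage to the limit then predicts
\begin{equation*}
\lim_{q\to1}d_{\mathcal S_{\operatorname{fr}}^{(k)}}(q)=\sum_{d\ge1}\frac{\mu(d)}{d^k}=\frac{1}{\zeta(k)},
\end{equation*}
which agrees with the classical natural density $d_{\mathcal S_{\operatorname{fr}}^{(k)}}=1/\zeta(k)$ of the $k$th power-free integers.

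The main obstacle — indeed the only step requiring real care — is justifying the interchange of the limit with the infinite sum over $d$, and I expect to settle it with an honest dominating bound rather than by invoking $q$-commensurateness abstractly. Since $\sum_{j=0}^{d^k-1}q^{j}\ge d^k q^{\,d^k-1}$ for $q\in[0,1)$, one obtains the uniform estimate
\begin{equation*}
\left|\mu(d)\,\frac{(1-q)\,q^{d^k}}{1-q^{d^k}}\right|\le \frac{q^{d^k}}{d^k q^{\,d^k-1}}=\frac{q}{d^k}\le\frac{1}{d^k},
\end{equation*}
and because $k\ge2$ the dominating series $\sum_{d\ge1}d^{-k}=\zeta(k)$ converges. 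Dominated convergence for series then legitimizes the termwise evaluation above; it is precisely here that the hypothesis $k\ge2$ enters. Since this rigorous computation already produces $1/\zeta(k)$ and coincides with the classical density, the set $\mathcal S_{\operatorname{fr}}^{(k)}$ is $q$-commensurate, and no separate appeal to the Frobenius–Hardy–Littlewood Tauberian theorem is needed.
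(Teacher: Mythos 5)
Your proof is correct and follows essentially the same route as the paper's, which simply defers to the proof of Corollary 1.5 of \cite{Paper1}: there too one writes the $k$th power-free indicator as $\sum_{d^k\mid n}\mu(d)$, decomposes the sum into the arithmetic-progression pieces handled by Theorem \ref{qdensthm}, and sums $\mu(d)/d^k$ to get $1/\zeta(k)$. Your explicit dominating bound $\bigl|(1-q)q^{d^k}/(1-q^{d^k})\bigr|\le d^{-k}$ makes rigorous (for radial $q\to1^-$) the interchange of limit and sum that the paper leaves implicit in the $q$-commensurability hypothesis, which is a welcome sharpening rather than a departure.
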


\begin{proof}
The steps of the proof of Cor. 1.5 in \cite{Paper1} hold identically here, only one replaces $F_{\mathcal S_{r,M}}(q)$ with $d_{\mathcal S_{r,M}}(q)$, then uses Theorem \ref{qdensthm} in place of Th. 1.3 of \cite{Paper1}.  \end{proof}

%, as well as that analogous computations to those for $F_{S}(q)$ will be hold. 
%In such cases, t
%\begin{remark} 
%\begin{remark}
Thus the subsets $\mathcal S_{r,t}$ and $\mathcal S_{\operatorname{fr}}^{(k)}$ are $q$-commensurate. %\end{remark}
Table 2 
%\ref{T2} 
illustrates the $k=2$ (square-free) case of Corollary \ref{qdensthm2} as $q\to 1$ radially.\footnote{The values in Table 2 were computed by A. V. Sills in Mathematica.} 

%
%\begin{remark}
%The formulas we prove in this paper hold universally for any subset $\mathcal S\subseteq \mathbb N$ if one takes $q\to 1$ radially, by the result of Frobenius mentioned above.
%\end{remark}

%\end{remark}
%Having seen now that $q$-commensurate subsets $\mathcal S$ exist --- particularly in the above cases of subsets central to the study of densities --- and that in each case $d_{\mathcal S}$ can be computed from two different $q$-series $d_{\mathcal S}(q)$ and $F_{\mathcal S}(q)$, it is natural to wonder: are there other such $q$-series limit identities for arithmetic densities?

%
%
%\begin{example}
%Here we approximate the density of $\mathcal S_{\text{fr}}^{(4)}$, the fourth power-free positive integers. Since $\zeta(4)=\pi^4/90$, it follows that the arithmetic density of $\mathcal S_{\text{fr}}^{(4)}$ is
%$\frac{90}{\pi^4} \approx 0.923938...$. Here we choose $N=5$ and compute the arithmetic density of $\mathcal S_{\text{fr}}^{(4)}(5)$, the positive integers which are not divisible by $2^4, 3^4$, and $5^4$. The density
%of this set is $208/225 \approx 0.924444...$. This density is fairly close to the density of fourth power-free integers because the convergence in the $N$ aspect is significantly faster for fourth power-free integers than for square-free integers as discussed above.
\ 
 
\begingroup\label{T2}
\begin{center}
\begin{tabular}{ | c | c | }%\label{T2}
\hline
$q$ & $d_{\mathcal S_{\text{fr}}^{(2)}}(q)$\\ \hline
$0.90$ & $0.739971...$ \\ \hline
$0.91$ & $0.729108...$ \\ \hline
$0.92$ & $0.717828...$ \\ \hline
$0.93$ & $0.706100...$ \\ \hline
$0.94$ & $0.693895...$ \\ \hline
$0.95$ & $0.681180...$ \\ \hline
$0.96$ & $0.667921...$ \\ \hline
$0.97$ & $0.654062...$ \\ \hline
$0.98$ & $0.639523...$ \\ \hline
$0.99$ & $0.624215...$ \\ \hline
$0.999$ & $0.609613...$ \\ \hline
$0.9999$ & $0.608069...$ \\  
\hline
\end{tabular}
\smallskip
\captionof{table}{\textit{Illustration of $d_{\mathcal S_{\text{fr}}^{(2)}}(q)$ approaching $\frac{6}{\pi^2}=0.607927...$}}
\end{center}
%\vspace{0.5cm}

\subsection{Further $q$-series density formulas}

The $q$-density statistic can be used as a building block for further arithmetic density relations.\footnote{E.g. see Section \ref{ExSect} below.} 
%Immediately, if $\mathcal  S \subseteq \mathbb N$ has commensurate $q$-density then L'Hospital's rule implies for every $k\geq 1$ that
%\begin{equation}
%\lim_{q\to 1}\frac{\frac{\text{d}^k}{\text{d}q^k}\sum_{n\in \mathcal S}q^n}{\frac{\text{d}^k}{\text{d}q^k}\sum_{n\geq 0}q^n}\  =\  d_{\mathcal S}.
%\end{equation}
%Taking antiderivatives of the numerator and denominator of $\lim_{q\to 1}\frac{\sum_{n\in \mathcal S}q^n}{\sum_{n\geq 0}q^n}$ (constants of integration $C=0$), then applying L'Hospital's rule, yields another density relation:
% \begin{flalign}
%-\lim_{q\to 1} \frac{1}{\operatorname{log}(1-q)}\sum_{n\in \mathcal S}\frac{q^n}{n}\  =\   d_{\mathcal S},
%\end{flalign}
% where $\operatorname{log} z$ denotes the principle branch of the complex logarithm. 
 %\end{theorem}
%\begin{proof}
%Apply L'Hospital's rule on the left-hand side; then the limit is given by \eqref{qdream}.
%\end{proof}
%We will assume all sets $\mathcal  S \subseteq \mathbb N$ below to be such that $\lim_{q\to 1}d_{\mathcal S}(q) = d_{\mathcal S}$, like those above. 
For $ \mathcal S \subseteq \mathbb N$ a $q$-commensurate subset\footnote{Or any subset of $\mathbb N$ if $q\to 1$ radially, by the result of Frobenius cited above.}, as $q \to 1$ %since $d_{\mathcal S}(q)\cdot q \sim d_{\mathcal S}$ as $q\to 1$ it follows from 
it follows from \eqref{qdream} that %, we have%\footnote{In fact, the asymptotic holds if one replaces $q$ with $q^{f}, f \geq 0,$ in the numerator on the right.}
\begin{equation}\label{qasymp}
{\sum_{n\in \mathcal S}q^n} \  \sim\  d_{\mathcal S} \cdot \frac{q}{1-q}.
\end{equation}

%More general expressions can be obtained. 
Because geometric series are central components of partition generating functions and $q$-series, the $q$-density may be embedded in diverse partition relations. For example, one can prove the central claims of \cite{Paper1} (with a sufficient condition) by applying generating function methods to \eqref{qasymp}. 
%

%Theorem \ref{muthm} feels like an analogue of 
%\begin{equation}
%\lim_{q\to 1}\sum_{\text{sm}(\lambda)\in \mathcal S}\mu_{\mathcal P}^*(\lambda)q^{|\lambda|}=d_{\mathcal S}.
%\end{equation}

\begin{theorem}\label{qgeneral}
For $ \mathcal S \subseteq \mathbb N$ a $q$-commensurate subset, we have 
\begin{equation*}
\lim_{q\to 1} F_{\mathcal S}(q)\  =\  -\lim_{q\to 1}\  \sum_{n\in \mathcal S}\sum_{k\geq 1}\frac{(-1)^k q^{nk+\frac{k(k-1)}{2}}}{(q;q)_{k-1}}\  = \  d_{\mathcal S}.
\end{equation*}
\end{theorem}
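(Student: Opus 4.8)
The plan is to prove Theorem \ref{qgeneral} by manipulating the generating function $F_{\mathcal S}(q)$ into a form where the $q$-density asymptotic \eqref{qasymp} can be applied directly. The starting point is the $\operatorname{lg}(\lambda)$-representation from \eqref{F_S}, namely
\begin{equation*}
F_{\mathcal S}(q) = (q;q)_{\infty}\sum_{n\in \mathcal S}\frac{q^n}{(q;q)_n}.
\end{equation*}
First I would expand the reciprocal $(q;q)_\infty/(q;q)_n$ using a standard $q$-series identity. Since $(q;q)_\infty/(q;q)_n = (q^{n+1};q)_\infty$, I would apply the Euler identity $(z;q)_\infty = \sum_{k\geq 0}\frac{(-1)^k q^{\binom{k}{2}}}{(q;q)_k}z^k$ with $z = q^{n+1}$, or equivalently expand $(q^{n+1};q)_\infty$ so as to surface the factor $q^{nk+\frac{k(k-1)}{2}}/(q;q)_{k-1}$ appearing in the target expression. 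The goal of this step is to rewrite $F_{\mathcal S}(q)$ as a double sum over $n\in\mathcal S$ and $k\geq 0$, isolating the $k=0$ term (which contributes $\sum_{n\in\mathcal S}q^n$ up to normalization) from the tail $k\geq 1$.

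Next I would identify the $k=0$ term as precisely $(1-q)\sum_{n\in\mathcal S}q^n = d_{\mathcal S}(q)$, whose limit is $d_{\mathcal S}$ by $q$-commensurateness. The remaining $k\geq 1$ terms should assemble into exactly $-\sum_{n\in\mathcal S}\sum_{k\geq 1}\frac{(-1)^k q^{nk+\frac{k(k-1)}{2}}}{(q;q)_{k-1}}$, up to reindexing and a factor of $(1-q)$ absorbed into the Pochhammer shift $(q;q)_k = (1-q)\cdots(q;q)_{k-1}$ — this is the bookkeeping step that produces the $(q;q)_{k-1}$ in the denominator and the sign convention in the theorem. The two expressions in the displayed chain of equalities are then seen to be equal as genuine $q$-series identities for $|q|<1$, before any limit is taken, which is the cleanest way to organize the argument.

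The main obstacle I expect is justifying the interchange of limit and summation as $q\to 1$: the individual terms $\frac{q^{nk+\binom{k}{2}}}{(q;q)_{k-1}}$ blow up as $q\to 1$ because of the vanishing Pochhammer denominators, so the limit cannot be taken termwise. The resolution must invoke the $q$-commensurate hypothesis exactly as the paper frames it — that the error terms in the asymptotic \eqref{qasymp} are of lower order and are killed upon multiplication by $(q;q)_\infty$ as $q\to 1$. Concretely, I would substitute the asymptotic $\sum_{n\in\mathcal S}q^n \sim d_{\mathcal S}\cdot\frac{q}{1-q}$ into the $\operatorname{lg}(\lambda)$-form of $F_{\mathcal S}(q)$ and argue that the surviving contribution as $q\to 1$ is $d_{\mathcal S}$, with all lower-order remainders suppressed by the factor $(q;q)_\infty\to 0$; this is where the definition of $q$-commensurate does the essential work and where the ``somewhat circular'' side-stepping noted in Section \ref{qdensity} is deployed.

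Finally, I would verify the sign: the middle expression carries an overall minus sign and alternating $(-1)^k$, so after extracting the $k=0$ term the residual alternating sum must reproduce $-d_{\mathcal S}$ in a way that, combined with the leading $k=0$ contribution, yields the stated value $d_{\mathcal S}$ for $\lim_{q\to 1}F_{\mathcal S}(q)$. The cleanest presentation treats the first equality (between $F_{\mathcal S}(q)$ and the negated double sum) as an exact algebraic identity valid for all $|q|<1$, and reserves the $q$-commensurate hypothesis solely for the second equality, where the limit is evaluated.
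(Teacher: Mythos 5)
Your overall architecture matches the paper's: establish the first equality as an exact $q$-series identity, then evaluate the limit by feeding the asymptotic \eqref{qasymp} into the resulting double sum. Your route to the first equality is genuinely different and perfectly viable: expanding $(q;q)_\infty/(q;q)_n=(q^{n+1};q)_\infty$ by Euler's $q$-exponential identity and shifting the index does give
$F_{\mathcal S}(q)=\sum_{n\in\mathcal S}q^n(q^{n+1};q)_\infty=-\sum_{n\in\mathcal S}\sum_{k\geq1}\frac{(-1)^kq^{nk+k(k-1)/2}}{(q;q)_{k-1}}$
exactly, whereas the paper proves this identity combinatorially, by reading the double sum as a generating function for certain partitions and conjugating. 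Your analytic route is arguably cleaner; the paper's buys a combinatorial interpretation.

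However, two concrete problems remain. First, your bookkeeping in the second paragraph is wrong: the $k=0$ term of the Euler expansion of $(q^{n+1};q)_\infty$ is $1$, so it contributes $\sum_{n\in\mathcal S}q^n$ to $F_{\mathcal S}(q)$ (which diverges as $q\to1$), not $(1-q)\sum_{n\in\mathcal S}q^n=d_{\mathcal S}(q)$; and after the shift $k\mapsto k+1$ the \emph{entire} expansion becomes the $k\geq1$ double sum, with nothing left over. Your claimed decomposition $F_{\mathcal S}(q)=d_{\mathcal S}(q)+(\text{tail})$ is inconsistent with your own (correct) assertion that the first equality is exact, and the identity $(q;q)_k=(1-q)\cdots(q;q)_{k-1}$ you invoke is garbled (the correct relation is $(q;q)_k=(1-q^k)(q;q)_{k-1}$). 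Second, the limit evaluation is only gestured at: you cannot ``substitute $\sum_{n\in\mathcal S}q^n\sim d_{\mathcal S}\,q/(1-q)$ into the $\operatorname{lg}$-form'' because the sum appearing there is $\sum_{n\in\mathcal S}q^n/(q;q)_n$. The missing step --- which is precisely what the paper's proof supplies --- is to swap the order of summation so the inner sum becomes $\sum_{n\in\mathcal S}q^{nk}$, apply \eqref{qasymp} with $q\mapsto q^k$ for each $k\geq1$ (this is where $q$-commensurateness is used), and then resum via Euler's identity $\sum_{k\geq0}\frac{(-1)^kq^{k(k+1)/2}}{(q;q)_k}=(q;q)_\infty$ to obtain $d_{\mathcal S}\cdot\bigl(1-(q;q)_\infty\bigr)\to d_{\mathcal S}$. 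Without that computation the second equality of the theorem is not established.
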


\begin{proof}
Take $q\mapsto q^k$ in \eqref{qasymp}.  Multiply through by $(-1)^{k} q^{\frac{k(k-1)}{2}}(q;q)_{k-1}^{-1}$ and sum both sides over $k\geq 1$, swapping order of summation on the left to give
\begin{equation}\label{qleft}
\sum_{n \in \mathcal S} \sum_{k\geq 1} \frac{(-1)^{k} q^{nk+\frac{k(k-1)}{2}}}{(q;q)_{k-1}}.
\end{equation}
For each $k\geq 1$, the factor $(q;q)_{k-1}^{-1}$ generates partitions with largest part strictly $<k$. The factor $q^{nk}$ adjoins a largest part $k$ with multiplicity $n\in \mathcal S$ to each partition. The $q^{k(k-1)/2}=q^{1+2+3+...+(k-1)}$ factor guarantees at least one part of each size $<k$. Thus \eqref{qleft} is the generating function for partitions $\gamma$ with largest part having multiplicity in $\mathcal S$, and with every natural number $<\operatorname{lg}(\gamma)$ appearing as a part, weighted by $(-1)^{\operatorname{lg}(\gamma)}=(-1)^k$. Under conjugation, %consideration of Young diagrams proves 
this set of partitions $\gamma$ maps to partitions $\lambda$ into distinct parts with smallest part  $\operatorname{sm}(\lambda)\in \mathcal S$, weighted by $\mu_{\mathcal P}(\lambda)=(-1)^{\ell(\lambda)}=(-1)^k$ which is nonzero since $\lambda$ has no repeated part. Multiplying by $-1$ gives $\mu_{\mathcal P}^*(\lambda):=-\mu_{\mathcal P}(\lambda)$; thus
\begin{equation*}
-\sum_{n\in \mathcal S}\sum_{k\geq 1}\frac{(-1)^k q^{nk+\frac{k(k-1)}{2}}}{(q;q)_{k-1}}=\sum_{\operatorname{sm}(\lambda) \in \mathcal S} \mu_{\mathcal P}^*(\lambda) q^{|\lambda|}= F_{\mathcal S}(q).
\end{equation*}
Manipulating the right-hand side of \eqref{qasymp} accordingly, we then have
\begin{equation*}
-\sum_{n\in \mathcal S}\sum_{k\geq 1}\frac{(-1)^k q^{nk+\frac{k(k-1)}{2}}}{(q;q)_{k-1}}\  \sim \   -d_{\mathcal S}\cdot \sum_{k\geq 1}\frac{(-1)^k q^{\frac{k(k+1)}{2}}}{(q;q)_{k}}=-d_{\mathcal S}\cdot \left((q;q)_{\infty}-1 \right),\end{equation*}
using an identity of Euler in the last step, which clearly approaches $d_{\mathcal S}$ as $q\to 1$. \end{proof}

We don't necessarily need to restrict {\it smallest} parts to $\mathcal S$ to give formulas for $d_{\mathcal S}$. For instance, by similar methods, an analogous formula extends to the ``largest parts'' case. %

\begin{theorem} \label{qgeneral2}
%Let $\operatorname{lg}(\lambda)$ denote the largest part of partition $\lambda$.  
For $ \mathcal S \subseteq \mathbb N$ a $q$-commensurate subset, we have 
\begin{equation*}
\lim_{q\to 1}\sum_{\operatorname{lg}(\lambda)\in \mathcal S}\mu_{\mathcal P}^*(\lambda)q^{|\lambda|}=d_{\mathcal S}.
\end{equation*}
\end{theorem}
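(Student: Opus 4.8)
The plan is to mirror the generating-function argument used for Theorem \ref{qgeneral}, interchanging the roles of the smallest and largest parts and then extracting the density $d_{\mathcal S}$ from the asymptotic \eqref{qasymp}. The first step is purely combinatorial bookkeeping of the left-hand side. Since $\mu_{\mathcal P}^*(\lambda)=-\mu_{\mathcal P}(\lambda)$ vanishes unless $\lambda$ has distinct parts, I would group the surviving partitions according to their largest part $m=\operatorname{lg}(\lambda)$. A partition into distinct parts with largest part $m$ consists of $m$ together with an arbitrary subset of $\{1,2,\dots,m-1\}$; weighting each such subset by its contribution to $\mu_{\mathcal P}^*$ and to $q^{|\lambda|}$ produces the signed generating function $q^m\prod_{j=1}^{m-1}(1-q^j)=q^m(q;q)_{m-1}$. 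Summing over the admissible largest parts gives the clean reduction
\begin{equation*}
\sum_{\operatorname{lg}(\lambda)\in\mathcal S}\mu_{\mathcal P}^*(\lambda)q^{|\lambda|}=\sum_{m\in\mathcal S}q^m(q;q)_{m-1},
\end{equation*}
which is the analogue of the passage to the right-hand representations in \eqref{F_S} used in the proof of Theorem \ref{qgeneral}.

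Next I would exploit the telescoping identity $q^m(q;q)_{m-1}=(q;q)_{m-1}-(q;q)_m$. Summed over all $m\geq 1$ this collapses to $1-(q;q)_\infty$, which tends to $1=d_{\mathbb N}$ as $q\to 1$ and recovers the full-set case immediately. For a general $q$-commensurate $\mathcal S$ the remaining task is to show that restricting the sum to $m\in\mathcal S$ multiplies this total by the density. Writing $G(M)=\#\{m\leq M:m\in\mathcal S\}$ and applying partial summation to $\sum_{m\geq 1}\chi_{\mathcal S}(m)\bigl[(q;q)_{m-1}-(q;q)_m\bigr]$, the rapidly vanishing tail kills the boundary term and the sum becomes $\sum_{m\geq 1}G(m)(b_m-b_{m+1})$ with $b_m:=q^m(q;q)_{m-1}$. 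Feeding in the density estimate $G(m)\sim d_{\mathcal S}\,m$ furnished by \eqref{qasymp}, together with $\sum_{m\geq1}m(b_m-b_{m+1})=\sum_{m\geq 1}b_m=1-(q;q)_\infty$, would formally collapse the main term to
\begin{equation*}
d_{\mathcal S}\bigl(1-(q;q)_\infty\bigr)\longrightarrow d_{\mathcal S}\qquad(q\to 1),
\end{equation*}
after which only the same elementary $q$-series identities as in Theorem \ref{qgeneral} remain.

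The step I expect to be the genuine crux is precisely this density extraction, namely justifying $\sum_{m\in\mathcal S}b_m\sim d_{\mathcal S}\sum_{m\geq1}b_m$, equivalently that the error $\sum_{m\geq1}\bigl(G(m)-d_{\mathcal S}\,m\bigr)(b_m-b_{m+1})$ is negligible. This is subtler here than in the smallest-part case: in \eqref{F_S} the relevant weights $q^m/(q;q)_m$ \emph{spread out} as $q\to1$, so the density is read off cleanly, whereas the weights $b_m=q^m(q;q)_{m-1}$ here are positive and strictly decreasing (since $b_{m+1}/b_m=q(1-q^m)<1$), so their increments $b_m-b_{m+1}$ are weighted toward small $m$ as $q\to1$. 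Controlling the interchange of the limit with the $\mathcal S$-restricted summation against these increments is therefore the entire analytic content, and is exactly what the $q$-commensurate hypothesis is invoked to supply, playing the role that \eqref{qasymp} plays for Theorem \ref{qgeneral} by quantifying how rapidly the counting function of $\mathcal S$ settles to $d_{\mathcal S}\,m$.
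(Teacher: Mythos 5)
Your combinatorial reduction of the left-hand side to $\sum_{m\in\mathcal S}q^m(q;q)_{m-1}$ is correct and is exactly the identity the paper's proof arrives at (the paper reaches it by applying \eqref{qasymp} with $q\mapsto q^k$, weighting by $(q;q)_{k-1}^{-1}$, and invoking the $q$-binomial theorem, rather than by your direct subset-of-$\{1,\dots,m-1\}$ argument, but the two agree). The telescoping evaluation $\sum_{m\ge1}b_m=1-(q;q)_\infty$ with $b_m=q^m(q;q)_{m-1}$ and the Abel-summation setup are also fine.

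The step you flagged as the crux is, however, a genuine gap, and it cannot be closed: the concentration of the increments $b_m-b_{m+1}$ at small $m$ that you noticed is not a technical nuisance to be absorbed into the $q$-commensurate hypothesis, but the reason the density extraction fails outright. Since every $b_m$ is positive for $0<q<1$ and the sum telescopes, one has $\sum_{m\ge2}b_m=(1-q)-(q;q)_\infty\to0$ as $q\to1^-$, so along the positive real axis $\sum_{m\in\mathcal S}b_m\to\chi_{\mathcal S}(1)$ for \emph{every} $\mathcal S\subseteq\mathbb N$; for instance $\mathcal S=2\mathbb N$ gives limit $0$ rather than $\tfrac12$. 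No rate hypothesis on $d_{\mathcal S}(q)\to d_{\mathcal S}$ can repair this, because the obstruction lives in the weights $b_m$, not in the set $\mathcal S$. (The paper's own proof stumbles at the same place: the error in \eqref{qasymp} after $q\mapsto q^k$ is only $o\bigl(\tfrac{q^k}{1-q^k}\bigr)$ --- for $\mathcal S=2\mathbb N$ it equals $-\tfrac{q^k}{2(1+q^k)}$, which does not vanish for fixed $k$ --- and after weighting by $(q;q)_{k-1}^{-1}$ and multiplying by $(q;q)_\infty$ these errors accumulate to the same order as the main term $d_{\mathcal S}(1-(q;q)_\infty)$.) Your closing contrast with Theorem \ref{qgeneral} is the right diagnosis: the smallest-part weights $q^n(q^{n+1};q)_\infty$ push their mass out to $n\to\infty$ as $q\to1$, which is what lets a density be read off there, whereas here the mass collapses onto $m=1$; so you should not expect to complete the argument along these lines.
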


\begin{proof}
Take $q\mapsto q^k$ in \eqref{qasymp}. Multiply both sides by $(q;q)_{k-1}^{-1}$, sum over $k\geq 1$, then swap order of summation and make the change of indices $k\mapsto k+1$ on the left side, to give %and take $k\mapsto k+1$ in the indices on the left to give
\begin{flalign}\label{asymp3}
\sum_{n\in \mathcal S}q^n\sum_{k\geq 0}\frac{q^{nk}}{(q;q)_{k}}&\  \sim\   d_{\mathcal S}\cdot \sum_{k\geq 1}\frac{q^k}{(q;q)_k}.
\end{flalign}
%The sum on the left generates partitions with largest part having multiplicity in $\mathcal S$ (which conjugates to the set of partitions with smallest part in $\mathcal S$). 
By the $q$-binomial theorem, the inner sum over $k\geq 0$ is equal to $(q^n;q)_{\infty}^{-1}$, and the summation on the right equals $(q;q)_{\infty}^{-1}-1$. Then multiplying both sides of \eqref{asymp3} by $(q;q)_{\infty}$ implies by well-known generating function arguments that, as $q\to 1$, we have
\begin{equation}\label{asymp4}
\sum_{n\in \mathcal S}q^n(q;q)_{n-1}=\sum_{\operatorname{lg}(\lambda)\in \mathcal S}\mu_{\mathcal P}^*(\lambda)q^{|\lambda|}\  \sim\  d_{\mathcal S}.
\end{equation}
%as $q\to 1$. 
\end{proof}

\begin{remark}
Taken together, Theorems \ref{qgeneral} and \ref{qgeneral2} present an ``Alladi duality'' connecting relations over minimal and maximal elements of multisets, as exemplified in \cite{Alladi1, Alladi_lecture, Wang3}. Whereas in \cite{Alladi1} the smallest and largest prime factors of $n$ induce a duality, here the duality is between the smallest and largest parts of $\lambda$. \end{remark}

\section{Proof of Theorem \ref{Thm1} and its corollaries}\label{ProofSect}
\subsection{Partition convolution identities}
%\section{Proof of Theorem 1.1 and other identities}
Now we turn our attention %to formulas involving partition-theoretic Dirichlet convolution as in \eqref{Dirichlet} and \eqref{Cauchy}, with an eye 
toward proving the partition convolution identities analogous to formulas of Wang \cite{Wang3}. It is an instance of \eqref{Cauchy} for arbitrary partition-theoretic functions $a(\lambda), b(\lambda)$ that
\begin{equation}\label{Cauchy2}
\left( \sum_{\lambda\in\mathcal P}a(\lambda) q^{|\lambda|}\right) \left( \sum_{\lambda\in \mathcal P} b(\lambda)q^{|\lambda|} \right)=\sum_{\lambda\in \mathcal P}(a*b)(\lambda)q^{|\lambda|} =\sum_{n\geq 0}\sum_{\operatorname{sm}(\lambda)=n}(a*b)(\lambda)q^{|\lambda|},
\end{equation}
where we now take $*$ to be partition-theoretic convolution (which is apparent from the context). We record a useful formula for the inner sum $\sum_{\operatorname{sm}(\lambda)=n}$ of the right-hand double series. This will serve as another building block for further arithmetic density formulas.

\begin{lemma}\label{convolemma}% \label{qgeneral2}
%Let $\operatorname{lg}(\lambda)$ denote the largest part of partition $\lambda$.  
For $a(\lambda), b(\lambda)$ functions on $\mathcal P$, we have
\begin{flalign*}
\sum_{\operatorname{sm}(\lambda)=n}(a*b)(\lambda)q^{|\lambda|}&=\left(\sum_{\operatorname{sm}(\gamma)=n}a(\gamma)q^{|\gamma|}\right)\left(b(\emptyset)+\sum_{\operatorname{sm}(\gamma){\geq}^* n}b(\gamma)q^{|\gamma|}\right)\\
& + \left(a(\emptyset)+\sum_{\operatorname{sm}(\gamma){\geq}^* n}a(\gamma)q^{|\gamma|}\right)\left(\sum_{\operatorname{sm}(\gamma)=n}b(\gamma)q^{|\gamma|}\right),\end{flalign*}where ${\geq}^*$ denotes $>$ if the associated summands vanish at partitions with any part repeated, and denotes $\geq$ otherwise.
\end{lemma}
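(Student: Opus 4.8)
The plan is to start from the left-hand side, unfold the partition convolution using its definition \eqref{Dirichlet}, and then reindex. Writing $(a*b)(\lambda)=\sum_{\delta\mid\lambda}a(\delta)b(\lambda/\delta)$ and recalling that summing over $\lambda$ together with its subpartitions $\delta\mid\lambda$ is the same as summing over all ordered pairs $(\delta,\gamma)$ via $\lambda=\delta\gamma$, $\gamma=\lambda/\delta$, I would convert the single sum over $\lambda$ with $\operatorname{sm}(\lambda)=n$ into a double sum over such pairs. Since $|\delta\gamma|=|\delta|+|\gamma|$, this is exactly the constrained instance of the Cauchy product \eqref{Cauchy2}:
\[
\sum_{\operatorname{sm}(\lambda)=n}(a*b)(\lambda)q^{|\lambda|}=\sum_{\substack{\delta,\gamma\in\mathcal P\\ \operatorname{sm}(\delta\gamma)=n}}a(\delta)\,b(\gamma)\,q^{|\delta|+|\gamma|}.
\]

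The next step is to analyze the constraint $\operatorname{sm}(\delta\gamma)=n$. For nonempty $\delta,\gamma$ one has $\operatorname{sm}(\delta\gamma)=\min(\operatorname{sm}(\delta),\operatorname{sm}(\gamma))$, whereas an empty factor simply leaves the other partition's smallest part unchanged; this is why the isolated terms $a(\emptyset)$ and $b(\emptyset)$ appear in the statement. Thus $\operatorname{sm}(\delta\gamma)=n$ holds exactly when both smallest parts are $\geq n$ (an empty factor imposing no constraint) and at least one of them equals $n$. I would then split the index set accordingly: the pairs with $\operatorname{sm}(\delta)=n$ and $\operatorname{sm}(\gamma)\geq n$ (or $\gamma=\emptyset$), together with the symmetric family in which the roles of $\delta$ and $\gamma$ are reversed. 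Each family factors as a product of an ``$\operatorname{sm}=n$'' sum against an ``$\operatorname{sm}\geq n$ plus empty'' sum, producing the two products displayed in the lemma.

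The delicate point, and the step I expect to be the main obstacle, is the overlap where $\operatorname{sm}(\delta)=\operatorname{sm}(\gamma)=n$. Taking the symmetric union of the two families naively counts these pairs twice, so an inclusion--exclusion correction of $-\big(\sum_{\operatorname{sm}(\delta)=n}a(\delta)q^{|\delta|}\big)\big(\sum_{\operatorname{sm}(\gamma)=n}b(\gamma)q^{|\gamma|}\big)$ is required. The role of the convention $\geq^*$ is precisely to absorb this correction without writing a separate term: replacing $\geq$ by the strict $>$ in one of the two slots removes exactly the doubly counted overlap from that factor. I would justify the stated rule by noting that a pair contributing to the overlap has both $\delta$ and $\gamma$ containing the part $n$, so the assembled $\lambda=\delta\gamma$ carries $n$ with multiplicity at least two; when the associated summand vanishes on partitions with a repeated part, strictifying the inequality in that slot is the correct way to record the boundary. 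Pinning down this strict-versus-nonstrict behaviour against the $\geq^*$ rule so that no overlap pair is lost or double-counted is the crux of the argument.

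Finally, I would verify the bookkeeping by comparing coefficients of $q^{m}$ on both sides in low degree — for instance checking $\lambda=(n,n)$, whose convolution $(a*b)((n,n))$ contains precisely the overlap contribution $a((n))\,b((n))$ — to confirm that the $\geq^*$ convention distributes the boundary pairs with the correct multiplicity. The remainder is a routine reorganization of absolutely convergent series for $|q|<1$, so no additional analytic input is needed beyond the formal identity \eqref{Cauchy2}.
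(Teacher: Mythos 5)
Your route is the same as the paper's: unfold $(a*b)(\lambda)$ into a sum over ordered pairs $(\delta,\gamma)$ with $\delta\gamma=\lambda$ as in \eqref{Cauchy2}, then split according to which factor carries the distinguished smallest part $n$. The difference is that you stop exactly at the point the paper's own proof glosses over, and you are right to flag it: the pairs with $\operatorname{sm}(\delta)=\operatorname{sm}(\gamma)=n$ are the whole issue, and your proposal never actually resolves how the ${\geq}^*$ convention disposes of them --- you name it ``the crux'' and leave it open, so as written the argument is incomplete.

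Here is what the bookkeeping gives if you carry it out. Let $A$ be the set of pairs with $\operatorname{sm}(\delta)=n$ and $\gamma$ empty or with all parts $\geq n$, and let $B$ be the symmetric set; the left-hand side is the sum over $A\cup B$. If exactly one of the two ${\geq}^*$'s resolves to $>$ --- the regime of Lemma \ref{convolemma2}, where $b=\mu_{\mathcal P}$ forces $>$ in its slot while a generic $a$ gives $\geq$ in the other --- then the right-hand side is $\sum_A+\sum_{B\setminus(A\cap B)}=\sum_{A\cup B}$ and the identity holds; this is the case you should verify in detail. But if neither $a$ nor $b$ vanishes on repeated parts, both inequalities are non-strict, the right-hand side is $\sum_A+\sum_B$, and $A\cap B$ is counted twice; your own test case detects this, e.g.\ for $a=b=1$ and $n=1$ the coefficient of $q^2$ is $(1*1)\bigl((1,1)\bigr)=3$ on the left but $4$ on the right. (Dually, if both vanish on repeated parts, both inequalities are strict and the contribution of $A\cap B$ --- nonzero, e.g.\ $a=b=\mu_{\mathcal P}$ with $\delta=\gamma=(n)$ --- is dropped entirely.) So the ``strictifying one slot'' move you gesture at cannot be read off from the stated ${\geq}^*$ rule in general: to finish, either restrict to the case where exactly one of $a,b$ is supported on distinct-part partitions and run the $\sum_A+\sum_{B\setminus(A\cap B)}$ count, or prove the identity with the explicit correction term $-\bigl(\sum_{\operatorname{sm}(\gamma)=n}a(\gamma)q^{|\gamma|}\bigr)\bigl(\sum_{\operatorname{sm}(\gamma)=n}b(\gamma)q^{|\gamma|}\bigr)$ appended. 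Your instinct to check $\lambda=(n,n)$ is exactly the right one; following through on it shows the displayed statement needs one of these two repairs, and that the paper's ``whichever subpartition contains the designated copy of $n$'' assignment is only consistent with the displayed formula in the one-strict-one-nonstrict regime.
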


\begin{proof}
    The right-hand side of Lemma \ref{convolemma} generates the partitions $\lambda$ with smallest part $n$ on the left. That the summands on the left should have the form $q^{|\lambda|}\sum_{\delta|\lambda} a(\delta)b(\lambda/\delta) = \sum_{\delta \delta'=\lambda}\left(a(\delta)q^{|\delta|}\cdot b(\delta')q^{|\delta'|}\right)$ is obvious from standard partition generating function ideas, e.g. \eqref{Cauchy2}. Now consider $\lambda=\delta\delta'$ on the left, choosing $\delta$ from a sum with coefficients $a(\gamma)$ on the right-hand side, and $\delta'$ from a sum with coefficients $b(\gamma)$. If subpartition $\delta$ contains the copy of part $n$ satisfying the ``$\operatorname{sm}(\lambda)=n$'' condition, then $\lambda$ is generated by the first term ``$(\sum_{\operatorname{sm}(\gamma)=n})(b+\sum_{\operatorname{sm}(\gamma){\geq}^*n})$'' on the right: $\delta$ comes from the $\sum_{\operatorname{sm}(\gamma)=n}$ factor, and $\delta'$ comes from the $b+\sum_{\operatorname{sm}(\gamma){\geq}^*n}$ factor. If, on the other hand, $\delta'$ contains the designated smallest part $n$, then $\lambda$ is generated by the second term ``$(a+\sum_{\operatorname{sm}(\gamma){\geq}^*n})(\sum_{\operatorname{sm}(\gamma)=n})$'' on the right: $\delta$ comes from $a+\sum_{\operatorname{sm}(\gamma){\geq}^*n}$ and $\delta'$ comes from the $\sum_{\operatorname{sm}(\gamma)=n}$ factor. %This generating function argument is the content of the theorem.  
\end{proof}

%Since partition-theoretic convolution arises from a generalization of classical arithmetic, one anticipates partition analogues of classical convolution identities to arise from Lemma \ref{convolemma} if one chooses $a(\lambda),b(\lambda)$ to be $\mu_{\mathcal P}(\lambda), \varphi_{\mathcal P}(\lambda),$ etc. %Moreover, if either function $a,b$ is of the form $\prod_{\lambda_i\in \lambda}\phi(\lambda_i)$ for some arithmetic function $\phi$, with ``$\lambda_i\in \lambda$'' indicating the product is over the parts $\lambda_i\in \mathbb N$ of $\lambda\in \mathcal P$, then 

We wish to find formulas with coefficients of the form $\mu_{\mathcal P}*a$, to yield Theorem \ref{Thm1}. Now, for $|q|<1$, recall the $q$-bracket $\left<a\right>_q:=\frac{\sum_{\lambda \in \mathcal P}a(\lambda)q^{|\lambda|}}{\sum_{\lambda \in \mathcal P}q^{|\lambda|}}=(q;q)_{\infty}\sum_{\lambda \in \mathcal P}a(\lambda)q^{|\lambda|}$ of Bloch and Okounkov. The $q$-bracket can be interpreted as the expected value of $a(\lambda)$ over all partitions, given certain hypotheses from statistical physics; %given the probability function $P(\lambda)=q^{|\lambda|}(q;q)_{\infty}$ for a random $\lambda\in \mathcal P$. 
it is connected to modular, quasi-modular and $p$-adic modular forms (see \cite{B-O, BOW, Padic, Zagier}). Then \eqref{Cauchy2} gives %, as $(q;q)_{\infty}=\sum_{\lambda\in \mathcal P}\mu_{\mathcal P}(\lambda)q^{|\lambda|}$, that
\begin{equation}
\sum_{\lambda \in \mathcal P}(\mu_{\mathcal P}*a)(\lambda)q^{|\lambda|}=\left<a\right>_q,\end{equation}
% an arithmetic function with $f(0):=1$, we have 
which of course leads to further partition convolution relations from $q$-bracket examples in the literature. Taken over all partitions $\lambda$, we see $\mu_{\mathcal P}*a$ induces a type of averaging phenomenon, as well as a connection to the theory of modular forms. %It is conceivable that if $a$ is an indicator function equal to 1 if $\text{sm}(\lambda)\in \mathcal S$ and equal to 0 otherwise, that the average value of $\text{sm}$ should be related to the arithmetic density of $\mathcal S$. 
%Then in Example \ref{qbracketcor}, partition convolution induces an averaging phenomenon on the right-hand side. 
%In the next example, a widely studied combinatorial structure is generated.

To restrict smallest parts to the subset $\mathcal S$, we require a specialization of Lemma \ref{convolemma}.

\begin{lemma}\label{convolemma2}% \label{qgeneral2}
%Let $\operatorname{lg}(\lambda)$ denote the largest part of partition $\lambda$.  
For $a(\lambda)$ a function on $\mathcal P$, we have
\begin{flalign*}
\sum_{\operatorname{sm}(\lambda)=n}(\mu_{\mathcal P}*a)(\lambda)q^{|\lambda|}  = \widetilde{A}_n(q)\cdot q^n(q^{n+1};q)_{\infty} %\left(a\left((n)\right)-a(\emptyset)+\sum_{\operatorname{sm}(\gamma){\geq}^* n}\left[a(\gamma\cdot (n))-a(\gamma)\right] q^{|\gamma|}\right)
,\end{flalign*}
where 
$\widetilde{A}_n(q):=A_n(q)-a(\emptyset)$ with $A_n(q)$ as in \eqref{A_ndef}.
\end{lemma}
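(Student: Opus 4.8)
The plan is to specialize Lemma~\ref{convolemma} to the case $b=\mu_{\mathcal P}$, and then to identify the resulting factors explicitly as $q$-series. Recall that $\mu_{\mathcal P}$ vanishes at any partition with a repeated part, so in the $\mu_{\mathcal P}$ factors the symbol ${\geq}^*$ means strict inequality $>$, while in the $a$ factors it retains whatever meaning it had for $a$. Applying Lemma~\ref{convolemma} with this $b$, the right-hand side becomes
\begin{flalign*}
\sum_{\operatorname{sm}(\lambda)=n}(\mu_{\mathcal P}*a)(\lambda)q^{|\lambda|}&=\left(\sum_{\operatorname{sm}(\gamma)=n}a(\gamma)q^{|\gamma|}\right)\left(\mu_{\mathcal P}(\emptyset)+\sum_{\operatorname{sm}(\gamma)> n}\mu_{\mathcal P}(\gamma)q^{|\gamma|}\right)\\
&\quad+\left(a(\emptyset)+\sum_{\operatorname{sm}(\gamma){\geq}^* n}a(\gamma)q^{|\gamma|}\right)\left(\sum_{\operatorname{sm}(\gamma)=n}\mu_{\mathcal P}(\gamma)q^{|\gamma|}\right).
\end{flalign*}

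First I would evaluate the two $\mu_{\mathcal P}$-weighted generating functions appearing on the right. Since $\mu_{\mathcal P}(\gamma)=(-1)^{\ell(\gamma)}$ on distinct-part partitions and vanishes otherwise, the factor $\mu_{\mathcal P}(\emptyset)+\sum_{\operatorname{sm}(\gamma)>n}\mu_{\mathcal P}(\gamma)q^{|\gamma|}$ is the generating function for distinct-part partitions all of whose parts exceed $n$, weighted by $(-1)^{\ell}$; by Euler's product this equals $\prod_{k>n}(1-q^k)=(q^{n+1};q)_{\infty}$. Similarly, $\sum_{\operatorname{sm}(\gamma)=n}\mu_{\mathcal P}(\gamma)q^{|\gamma|}$ generates distinct-part partitions with smallest part exactly $n$, which peels off a mandatory part $n$ (contributing $-q^n$) times the partitions into distinct parts all exceeding $n$, giving $-q^n(q^{n+1};q)_{\infty}$.

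Substituting these evaluations factors out the common term $q^n(q^{n+1};q)_{\infty}$. The first summand becomes $\bigl(\sum_{\operatorname{sm}(\gamma)=n}a(\gamma)q^{|\gamma|}\bigr)(q^{n+1};q)_{\infty}$, and the second becomes $-\bigl(a(\emptyset)+\sum_{\operatorname{sm}(\gamma){\geq}^*n}a(\gamma)q^{|\gamma|}\bigr)q^n(q^{n+1};q)_{\infty}$. The bracket $\widetilde{A}_n(q)$ must therefore equal
\begin{equation*}
q^{-n}\sum_{\operatorname{sm}(\gamma)=n}a(\gamma)q^{|\gamma|}-a(\emptyset)-\sum_{\operatorname{sm}(\gamma){\geq}^*n}a(\gamma)q^{|\gamma|}.
\end{equation*}
The final step is to reconcile this with the definition $\widetilde A_n(q)=A_n(q)-a(\emptyset)$, i.e. to check that $q^{-n}\sum_{\operatorname{sm}(\gamma)=n}a(\gamma)q^{|\gamma|}-\sum_{\operatorname{sm}(\gamma){\geq}^*n}a(\gamma)q^{|\gamma|}$ matches $A_n(q)=a((n))+\sum_{\operatorname{sm}(\gamma){\geq}^*n}[a(\gamma\cdot(n))-a(\gamma)]q^{|\gamma|}$. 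The point is a reindexing: every partition $\gamma'$ with $\operatorname{sm}(\gamma')=n$ is uniquely $\gamma'=\gamma\cdot(n)$ where $\gamma$ ranges over partitions with $\operatorname{sm}(\gamma){\geq}^*n$ together with $\gamma=\emptyset$ (the partition $(n)$ itself), and $|\gamma'|=|\gamma|+n$, so $q^{-n}\sum_{\operatorname{sm}(\gamma')=n}a(\gamma')q^{|\gamma'|}=a((n))+\sum_{\operatorname{sm}(\gamma){\geq}^*n}a(\gamma\cdot(n))q^{|\gamma|}$. Subtracting the $\sum_{\operatorname{sm}(\gamma){\geq}^*n}a(\gamma)q^{|\gamma|}$ term then yields exactly $A_n(q)$, completing the identification.

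I expect the main subtlety to lie in the careful handling of the ${\geq}^*$ convention: in the $\mu_{\mathcal P}$ factors one genuinely needs strict inequality (so that no part repeats and the Euler product is clean), whereas the $a$ factors carry their own ${\geq}^*$ depending on whether $a$ vanishes on repeated parts, and the reindexing $\gamma'\mapsto\gamma\cdot(n)$ must be shown to respect exactly this convention — that is, adjoining a part $n$ to $\gamma$ produces a partition with smallest part $n$ and no illegitimate repetition precisely when $\gamma$ satisfies $\operatorname{sm}(\gamma){\geq}^*n$. Verifying that the boundary case (whether $\gamma$ may already contain a part equal to $n$) is treated consistently on both sides is where I would be most careful.
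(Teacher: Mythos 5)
Your proposal is correct and follows essentially the same route as the paper: specialize Lemma~\ref{convolemma} with $b=\mu_{\mathcal P}$, evaluate the two M\"obius-weighted sums as $(q^{n+1};q)_{\infty}$ and $-q^n(q^{n+1};q)_{\infty}$, factor out $q^n(q^{n+1};q)_{\infty}$, and reindex the $\operatorname{sm}(\gamma)=n$ sum via $\gamma\mapsto\gamma\cdot(n)$ to recover $\widetilde{A}_n(q)$. Your extra care with the ${\geq}^*$ convention in the reindexing step is exactly the ``little arithmetic'' the paper leaves implicit.
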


\begin{proof}
Swap $a,b$ in Lemma \ref{convolemma}, noting $a*b=b*a$, and let $b(\lambda)=\mu_{\mathcal P}(\lambda)$ to yield 
\begin{flalign*}
\sum_{\operatorname{sm}(\lambda)=n}(\mu_{\mathcal P}*a)(\lambda)q^{|\lambda|}=&
-q^n(q^{n+1};q)_{\infty}\left(a(\emptyset)+\sum_{\operatorname{sm}(\gamma){\geq}^* n}a(\gamma)q^{|\gamma|}\right)\\ \nonumber &+(q^{n+1};q)_{\infty}\left(\sum_{\operatorname{sm}(\gamma)=n}a(\gamma)q^{|\gamma|}\right).\end{flalign*}
Factor $q^n$ out of the $\sum_{\operatorname{sm}(\gamma)=n}$ factor on the right; the summands undergo the transformation $a\left(\gamma)q^{|\gamma|}\mapsto a(\gamma\cdot(n)\right)q^{|\gamma|}$. Then a little arithmetic gives 
\begin{flalign*}
\sum_{\operatorname{sm}(\lambda)=n}(\mu_{\mathcal P}*a)(\lambda)q^{|\lambda|}=
q^n(q^{n+1};q)_{\infty}\left(a\left((n)\right) - a(\emptyset)+\sum_{\operatorname{sm}(\gamma){\geq}^* n}\left[a(\gamma\cdot(n))-a(\gamma)\right] q^{|\gamma|}\right).\end{flalign*}
The parenthetical term on the right is exactly $\widetilde{A}_n(q)$, completing the proof. \end{proof}

This lemma works out nicely when the choice of $a(\lambda)$ allows one to factor out aspects of $\widetilde{A}_n(q)$, or simplify the sum otherwise.
Now the following general lemma, wrapping together the preceding formulas, will lead to the proof of Theorem \ref{Thm1} as $q\to 1$.

\begin{lemma}\label{convolemma4}% \label{qgeneral2}
%Let $\operatorname{lg}(\lambda)$ denote the largest part of partition $\lambda$.  
For $a(\lambda)$ a partition-theoretic function, $f(n)$ an arithmetic function, we have 
\begin{flalign*}
\sum_{\lambda\neq \emptyset}(\mu_{\mathcal P}*a)(\lambda)f\left(\operatorname{sm}(\lambda)\right) q^{|\lambda|}  \  =\   (q;q)_{\infty}\sum_{n\geq 1 }\frac{f(n)\widetilde{A}_n(q) q^n}{(q;q)_n},\end{flalign*}
where 
$\widetilde{A}_n(q):=A_n(q)-a(\emptyset)$ with $A_n(q)$ as in \eqref{A_ndef}.
\end{lemma}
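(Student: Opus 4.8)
The plan is to obtain the stated identity for Lemma \ref{convolemma4} by summing the single-$n$ formula of Lemma \ref{convolemma2} against the weight $f(n)$ over all $n\geq 1$. Since every nonempty partition $\lambda$ has a well-defined smallest part $\operatorname{sm}(\lambda)=n$ for exactly one $n\geq 1$, the left-hand side decomposes as
\begin{flalign*}
\sum_{\lambda\neq\emptyset}(\mu_{\mathcal P}*a)(\lambda)f(\operatorname{sm}(\lambda))q^{|\lambda|}=\sum_{n\geq 1}f(n)\sum_{\operatorname{sm}(\lambda)=n}(\mu_{\mathcal P}*a)(\lambda)q^{|\lambda|},
\end{flalign*}
where pulling the constant $f(n)$ outside the inner sum is justified because $f(\operatorname{sm}(\lambda))=f(n)$ is fixed on each inner block. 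This is the conceptual heart of the argument: the partition of the index set by smallest part converts a single sum over all $\lambda\neq\emptyset$ into a double sum to which Lemma \ref{convolemma2} applies term by term.

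Next I would substitute the closed form from Lemma \ref{convolemma2}, namely $\sum_{\operatorname{sm}(\lambda)=n}(\mu_{\mathcal P}*a)(\lambda)q^{|\lambda|}=\widetilde{A}_n(q)\cdot q^n(q^{n+1};q)_{\infty}$, to get
\begin{flalign*}
\sum_{n\geq 1}f(n)\,\widetilde{A}_n(q)\,q^n(q^{n+1};q)_{\infty}.
\end{flalign*}
The final step is purely a Pochhammer manipulation: one writes $(q^{n+1};q)_{\infty}=\frac{(q;q)_{\infty}}{(q;q)_n}$, which follows from the definitions $(q;q)_\infty=\prod_{k\geq 1}(1-q^k)$ and $(q;q)_n=\prod_{k=1}^{n}(1-q^k)$. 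Pulling the now-$n$-independent factor $(q;q)_{\infty}$ out of the sum then yields exactly
\begin{flalign*}
(q;q)_{\infty}\sum_{n\geq 1}\frac{f(n)\,\widetilde{A}_n(q)\,q^n}{(q;q)_n},
\end{flalign*}
which is the claimed right-hand side.

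I expect no serious obstacle here, since the lemma is essentially a bookkeeping consequence of Lemma \ref{convolemma2}; the only point requiring a word of care is the interchange of summation implicit in reorganizing $\sum_{\lambda\neq\emptyset}$ as $\sum_{n\geq 1}\sum_{\operatorname{sm}(\lambda)=n}$, together with the legitimacy of factoring $(q;q)_{\infty}$ out of the infinite sum. Both are valid for $|q|<1$ by absolute convergence of the underlying series (the partition generating functions converge absolutely on the unit disk, and the decomposition by smallest part is a genuine partition of $\mathcal P\setminus\{\emptyset\}$ into disjoint blocks). If one wishes to be fully rigorous, the cleanest justification is to observe that the double series $\sum_{n\geq 1}\sum_{\operatorname{sm}(\lambda)=n}|(\mu_{\mathcal P}*a)(\lambda)f(n)|\,|q|^{|\lambda|}$ converges, so Fubini/Tonelli permits the rearrangement; otherwise the manipulation is formal and the identity holds as an equality of $q$-series.
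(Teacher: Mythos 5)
Your proposal is correct and follows essentially the same route as the paper: multiply the identity of Lemma \ref{convolemma2} by $f(n)$, sum over $n\geq 1$ using the decomposition of nonempty partitions by smallest part, and apply the factorization $(q^{n+1};q)_{\infty}=(q;q)_{\infty}/(q;q)_n$. The added remarks on absolute convergence are a harmless (and reasonable) elaboration of what the paper leaves implicit.
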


\begin{proof}
Multiply Lemma \ref{convolemma2} through by $f(n)$, and sum over $n\geq 1$, noting for each index $n$ the partitions generated on the left-hand side have smallest part $\operatorname{sm}(\lambda)=n$. Swapping order of summation on the left, then, takes $f(n)\mapsto f\left(\operatorname{sm}(\lambda) \right)$. On the right-hand side, use the factorization $(q^{n+1};q)_{\infty}=\frac{(q;q)_{\infty}}{(q;q)_n}$ to give the Lemma \ref{convolemma4}.
\end{proof}

\subsection{Proof of Theorem \ref{Thm1} and Corollaries \ref{Cor2} and \ref{Cor}}

We now have all the ingredients to prove Theorem \ref{Thm1} and its corollaries.

\begin{proof}[Proof of Theorem \ref{Thm1}]
Consider the identity in Lemma \ref{convolemma4} above. By hypothesis we assume $a(\emptyset)=1$.\footnote{A slightly more general theorem follows from using arbitrary $a(\emptyset)$ in this proof.} 
Letting $\chi_{\mathcal S}(n)$ denote the indicator function for $\mathcal S \subseteq \mathbb N$, make the substitution $f(n)\mapsto \frac{\chi_{\mathcal S}(n)}{f(n)}$ (hence the condition that $f$ not vanish on $\mathcal S$ in the denominator) to give
\begin{flalign}\label{convoproof}
\sum_{\operatorname{sm}(\lambda)\in \mathcal S} \frac{(\mu_{\mathcal P}*a)(\lambda)}{f\left(\operatorname{sm}(\lambda)\right)} q^{|\lambda|}  \  =\   (q;q)_{\infty}\sum_{n\in \mathcal S}\frac{\widetilde{A}_n(q) q^n}{f(n)\cdot (q;q)_n}.
\end{flalign}
As $q\to 1$, if $A_n(q)\to 1$ for $n\in \mathcal S$, then $\widetilde{A}_n(q)=A_n(q)-1\to 0$ in the summands on the right. Clearly the right side vanishes as $q\to 1$, which proves the first identity of the theorem.

Similarly, if $A_n(q)\to 1+f(n)$ for $n\in \mathcal S$, then $\widetilde{A}_n(q) \to f(n)$. In the limit, $\frac{\widetilde{A}_n(q)}{f(n)}\to 1$ and the right-hand side of \eqref{convoproof}  asymptotically equals $F_{\mathcal S}(q)$, which approaches $d_{\mathcal S}$ by Theorem \ref{qgeneral}. This is the content of the second identity and its analytic condition.
%
%
%%, for $q$-commensurate $\mathcal S\subseteq \mathbb N$ and $a(\lambda)$ a partition-theoretic function with $a(\emptyset)=1$, we have 
%\begin{flalign}
%\lim_{q\to 1}\sum_{\operatorname{sm}(\lambda)\in \mathcal S}\frac{(\mu_{\mathcal P}*a)(\lambda)}{f\left(\operatorname{sm}(\lambda)\right)}q^{|\lambda|}  \  &=\  -a(\emptyset)\lim_{q\to 1} (q;q)_{\infty}\sum_{n\in \mathcal S}\frac{ q^n}{(q;q)_n} +   \lim_{q\to 1} (q;q)_{\infty}\sum_{n\in \mathcal S}\frac{A_n (q) q^n}{(q;q)_n}\\ \nonumber &=\  -a(\emptyset) \cdot d_{\mathcal S} \  +\  \lim_{q\to 1} (q;q)_{\infty}\sum_{n\in \mathcal S}\frac{A_n (q) q^n}{(q;q)_n},\end{flalign}
%using Theorem \ref{qgeneral} in the final step. The right-hand limit vanishes if and only if 
%\begin{equation}
%\sum_{n\in \mathcal S}\frac{A_n (q) q^n}{(q;q)_n}=o\left(\sum_{n\geq 1}\frac{q^n}{(q;q)_n} \right)
%\end{equation}
%as $q\to 1$, noting $\sum_{n\geq 1}\frac{q^n}{(q;q)_{n}}=(q;q)_{\infty}^{-1}-1$. A sufficient condition for this vanishing is that $|A_n(q)|=o(1)$ as $q\to 1$ (better conditions can likely be found). If $a(\emptyset)=1$, then multiplying through by $-1$ gives the theorem. 
\end{proof}

\begin{remark}
The idea is to set $f(n)=\lim_{q\to 1} \widetilde{A}_n(q)$ for $n\in \mathcal S$, %with $f$ usually {not} a function of $q$, 
so the right-hand side becomes $F_{\mathcal S}(q)$ in the limit.
\end{remark}

\begin{proof}[Proof of Corollary \ref{Cor2}]
We have by hypothesis $A_n(q)= \widetilde{A}_n(q)+1\to 0$, which is equivalent to having $\lim_{q\to 1}\widetilde{A}_n(q)=-1$. Then setting $f(k)=-1$ in \eqref{convoproof} gives the corollary as $q\to 1$. \end{proof}

\begin{proof}[Proof of Corollary \ref{Cor}]
With $a,f,g$ and $b=b_{a,f,g}$ as defined in Corollary \ref{Cor}, following Wang's treatment, apply the partition M\"{o}bius inversion formula \eqref{Pmobius} to the definition of $b_{a,f,g}(\lambda)$ in the statement of the corollary to give
\begin{flalign}\label{Wanalog}
\left(\mu_{\mathcal P}*a\right)(\lambda)\frac{g\left(\operatorname{sm}(\lambda)\right)}{f\left(\operatorname{sm}(\lambda)\right)}\  =\  \sum_{\delta | \lambda}\mu_{\mathcal P} (\delta)b_{a,f,g}(\lambda/\delta)\  =\  (\mu_{\mathcal P}*b)(\lambda).
\end{flalign}
Divide through by $g\left(\operatorname{sm}(\lambda)\right)$ (thus the condition that it also is non-vanishing), multiply both sides by $q^{|\lambda|}$, sum over partitions $\lambda$ with $\operatorname{sm}(\lambda)\in \mathcal S$, then compare the result with the $A_n(q)\to 1+f(n)$ case of Theorem \ref{Thm1}, to see % when $f(n)\neq 0$,
\begin{flalign}\label{Wfinal}
\lim_{q\to 1}\sum_{\operatorname{sm}(\lambda)\in \mathcal S}\frac{(\mu_{\mathcal P}*b)(\lambda)}{g\left(\operatorname{sm}(\lambda)\right)}q^{|\lambda|}  \  =\  \lim_{q\to 1}\sum_{\operatorname{sm}(\lambda)\in \mathcal S}\frac{(\mu_{\mathcal P}*a)(\lambda)}{f\left(\operatorname{sm}(\lambda)\right)}q^{|\lambda|}  \  =\   d_{\mathcal S}.\end{flalign}
\end{proof}

\begin{proof}[Proof of Corollary \ref{Cor5}]
With $b(\lambda)=b_{a,-\varphi, -1}(\lambda), B_n(q)$ as defined in Corollary \ref{Cor5}, if $B_n(q)\to 0$ on $\mathcal S$, the left-hand side of the resulting equation from \eqref{Wfinal} is the instance of Corollary \ref{Cor2} where $a$ is replaced by $b=b_{a,-\varphi,-1}$. The right-hand equality of \eqref{Wfinal} then yields Corollary \ref{Cor5}. \end{proof}

\section{Further examples}\label{ExSect}
In this section we give further examples of applications of the formulas and methods developed above. We note in passing for any $q$-commensurate subset $\mathcal S$, that L'Hospital's rule yields infinitely many formulas for $\lim_{q\to 1}$ $d_{\mathcal S}(q)$ by taking derivatives of the numerator and denominator of \eqref{qdens}. %implies for every $k\geq 1$ that
%\begin{equation}
%\lim_{q\to 1}\frac{\frac{\text{d}^k}{\text{d}q^k}\sum_{n\in \mathcal S}q^n}{\frac{\text{d}^k}{\text{d}q^k}\sum_{n\geq 0}q^n}\  =\  d_{\mathcal S}.
%\end{equation}
Taking antiderivatives instead, then applying L'Hospital's rule, yields another arithmetic density relation for $q$-commensurate subset $\mathcal S$:
 \begin{flalign}
-\lim_{q\to 1} \frac{1}{\operatorname{log}(1-q)}\sum_{n\in \mathcal S}\frac{q^n}{n}\  =\   d_{\mathcal S},
\end{flalign}
 where $\operatorname{log} z$ denotes the principle branch of the complex logarithm function for $z\in \mathbb C$. % (or take $q\in (0,1)$). 

The asymptotic \eqref{qasymp} is suggestive of further, less trivial formulas to compute $d_{\mathcal S}$.

%
%\begin{proof}
%Take $q\mapsto q^k$ in \eqref{qasymp} and sum both sides over $k\geq 1$. %Noting $0\leq d_{\mathcal S} \leq 1$, 
%Swap order of %(absolutely convergent) 
%summation $\sum_{k\geq 1}\sum_{n\in \mathcal S}q^{nk}=\sum_{n\in \mathcal S}\frac{q^n}{1-q^n}\   \sim\  d_{\mathcal S} \sum_{k\geq 1} \frac{q^k}{1-q^k}$; Tannery's theorem justifies factoring out $d_{\mathcal S}=\lim_{q\to 1}d_{\mathcal S}(q^k)$ on the right. A little algebra completes the proof. 
%\end{proof}
%
%
%
%\begin{proof}
%%We proceed almost exactly as in the previous proof. 
%Take $q\mapsto q^k$ in \eqref{qasymp}. Multiply both sides of the asymptotic equality by $\mu(k)$, sum over $k\geq 1$, and swap order of summation on the left to arrive at% (appealing again to Tannery's theorem to factor out $d_{\mathcal S}$) we have  % (allowing $\lim_{q\to 1}$ inside the sum over $k$) and Lambert series:
%\begin{equation}
%\sum_{k\geq 1}\sum_{n\in \mathcal S}\mu(k)q^{nk}\  \sim\   d_{\mathcal S} \cdot \sum_{k\geq 1}\frac{\mu(k)q^k}{1-q^k}
%\nonumber \  =\  d_{\mathcal S}\cdot \sum_{k\geq 1}q^{k}\sum_{d|k}\mu(d)\  =\  d_{\mathcal S} \cdot q, %\  \to \  d_{\mathcal S}% \lim_{q\to 1}q=d_{\mathcal S}.
%\end{equation}
%%using classical identities. 
%which approaches $d_{\mathcal S}$ as $q\to 1$. \end{proof}
%
%Examples \ref{ex1} and \ref{muthm} are special cases of a general relation. 

\begin{example}\label{genthm} Let $\mu(k)$ denote the classical M\"{o}bius function, and let $f(k)$ be an arithmetic function that is not identically zero.  For $ \mathcal S \subseteq \mathbb N$ a $q$-commensurate subset, we have 
\begin{flalign*}
\lim_{q\to 1}\  \frac{\sum_{n\in \mathcal S}\sum_{k\geq 1}(\mu*f)(k)q^{nk }}{\sum_{k\geq 1} {f(k)q^k}}\  =\   d_{\mathcal S},
\end{flalign*}
where $*$ denotes classical Dirichlet convolution.
\end{example}

\begin{proof}
Take $q\mapsto q^k$ in \eqref{qasymp}. Multiply both sides of the asymptotic equality by $(\mu*f)(k)=\sum_{d|k}\mu(d)f(k/d)$, sum over $k\geq 1$, and (noting the series are absolutely convergent) swap order of summation on the left. Then as $q\to 1$, well-known relations involving classical Lambert series and Dirichlet convolution give  
\begin{equation}\label{eq}
\sum_{n\in \mathcal S} \sum_{k\geq 1} (\mu*f)(k)q^{nk}\  \sim \  d_{\mathcal S}\cdot \sum_{k\geq 1}\frac{(\mu*f)(k)q^k}{1-q^k}\  =\  d_{\mathcal S}\cdot \sum_{k\geq 1}f(k)q^{k}.
\end{equation} %We note that Tannery's theorem justifies factoring out $d_{\mathcal S}=\lim_{q\to 1}d_{\mathcal S}(q^k)$ on the right side of the asymptotic. 
Dividing through by $\sum_{k\geq 1}f(k)q^{k}$ completes the proof.
\end{proof}

Setting $f(k):=1$ for all $k\geq 1$, then Example \ref{genthm} reduces to \eqref{qdream}. Setting $f(k):=\sigma_0(k)$, the classical divisor function such that $(\mu * \sigma_0)(k)=1$, yields a Lambert series calculation. % example. % the first example above. 

\begin{example}\label{ex1} For $ \mathcal S \subseteq \mathbb N$ a $q$-commensurate subset, we have 
\begin{equation*} 
\lim_{q\to 1}\frac{\sum_{n\in \mathcal S}\frac{q^n}{1-q^n}}{\sum_{n\geq 1} \frac{q^n}{1-q^n}}\  =\  d_{\mathcal S}.
\end{equation*}
\end{example}

Setting $f(1):=1, f(k):=0$ for $k> 1$, viz. the function $f(k):=\sum_{d|k}\mu(d)$, gives another density formula that resembles a power series analogue of the first sum in \eqref{F_S}.

\begin{example}\label{muthm}  %Let $\mu(k)$ denote the classical M\"{o}bius function. 
For $ \mathcal S \subseteq \mathbb N$ a $q$-commensurate subset, we have 
\begin{flalign*}
\lim_{q\to 1}\sum_{n\in \mathcal S}\sum_{k\geq 1}\mu(k)q^{nk}\  =\  d_{\mathcal S}.% \lim_{q\to 1}q=d_{\mathcal S}.
\end{flalign*}
\end{example}

Setting $f(n)$ equal to the partition function $p(n)$ in Example \ref{genthm}, gives an example that looks more like the formulas in \cite{Paper1} and the present work. 
 
\begin{example} Let $p(n)$ denote the partition function.  For $ \mathcal S \subseteq \mathbb N$ a $q$-commensurate subset, we have 
\begin{flalign*}
\lim_{q\to 1}\  (q;q)_{\infty}\sum_{n\in \mathcal S}\sum_{k\geq 1}(\mu*p)(k)q^{nk }\  =\   d_{\mathcal S}.
\end{flalign*}
\end{example}
%
%\begin{proof}
%Take $f(n)=p(n)$ in Example \ref{genthm}, noting $\sum_{n\geq 1}p(n)q^n \sim (q;q)_{\infty}^{-1}$ as $q\to 1$.
%\end{proof}
%

%By well known Lambert series and Dirichlet convolution relations, t
%The case $f(n):=d(n)$, the classical divisor function, gives Example \ref{ex1} above. The case $f(n):=1$ if $n=1$ and $f(n):=0$ otherwise, %(i.e., $f(n)=\sum_{d|n}\mu(d)$) 
%gives Example \ref{muthm}. 

Beyond giving analogues of classical convolution theorems as in Section \ref{Sect2} above, and giving formulas for expected value and arithmetic density, %combined with generating function techniques, easy 
partition-theoretic convolution leads to other interesting $q$-series relations. Here is a nice consequence of Lemma \ref{convolemma}, making another connection between partition convolution and modular forms. % we give two examples.
%In the following example, partition convolution induces an averaging phenomenon with modular properties.
%
%In the next example, partition convolution gives a version of the $q$-binomial theorem.
%
%
%\begin{example}\label{qbracketcor2}% \label{qgeneral2}
%%Let $\operatorname{lg}(\lambda)$ denote the largest part of partition $\lambda$.  
%For $|q|<1$,  we have 
%$$
%\sum_{\lambda \in \mathcal P}(\mu_{\mathcal P}*z^{\ell})(\lambda)q^{|\lambda|}=\sum_{n\geq 0}\frac{q^n (z;q)_n}{(q;q)_n}.$$
%% an arithmetic function with $f(0):=1$, we have 
%\end{example}
%\begin{proof} This is a rewriting of $(q;q)_{\infty}(z;q)_{\infty}^{-1}$ via \eqref{Cauchy2}, and can also be proved from \eqref{convolemma} under the substitution $a(\lambda)=\mu_{\mathcal P}(\lambda), b(\lambda)=z^{\ell}(\lambda):=z^{\ell(\lambda)}$, with a little algebra.
%\end{proof}
%
%

\begin{example}\label{cor1}% \label{qgeneral2}
%Let $\operatorname{lg}(\lambda)$ denote the largest part of partition $\lambda$.  
For $z\in \mathbb C, z\neq 0,$ set $z^{\ell}(\lambda):=z^{\ell(\lambda)},\  z^{-\ell}(\lambda):=z^{-\ell(\lambda)}$. Recall the rank generating function for strongly unimodal sequences, $U(z,q):=\sum_{n\geq 0}q^{n+1}(-zq;q)_{n}(-z^{-1}q;q)_{n}$. Then  
$$
\sum_{\lambda \in \mathcal P}(z^{\ell}*z^{-\ell})(\lambda)q^{|\lambda|}\  =\  1+\frac{z+z^{-1}}{(zq;q)_{\infty}(z^{-1}q;q)_{\infty}}U(-z,q).$$
% an arithmetic function with $f(0):=1$, we have 
\end{example}

\begin{remark}
We note in Example \ref{cor1} a close relation to the crank generating function of Andrews-Garvan, $C(z,q)=\frac{(q;q)_{\infty}}{(zq;q)_{\infty}(z^{-1}q;q)_{\infty}}$ (see  \cite{A-G}), viz. 
\begin{equation}
\sum_{\lambda \neq \emptyset}(z^{\ell}*z^{-\ell})(\lambda)q^{|\lambda|}\  =\frac{(z+z^{-1})C(z,q) U(-z,q)}{(q;q)_{\infty}}.
\end{equation}
\end{remark}

\begin{proof} Take $a=z^{\ell}, b=z^{-\ell}$ in Lemma \ref{convolemma} to yield
\begin{flalign*}
\sum_{\operatorname{sm}(\lambda)=n}(z^{\ell}*z^{-\ell})(\lambda)q^{|\lambda|}&=\frac{zq^n}{(zq^{n};q)_{\infty}}\cdot \frac{1}{(z^{-1}q^{n};q)_{\infty}}+\frac{1}{(zq^{n};q)_{\infty}}\cdot \frac{z^{-1}q^n}{(z^{-1}q^{n};q)_{\infty}}\\\nonumber &=\frac{z+z^{-1}}{(zq;q)_{\infty}(z^{-1}q;q)_{\infty}}\cdot q^{n}(zq;q)_{n-1}(z^{-1}q;q)_{n-1}.
\end{flalign*}
Summing over $n\geq 1$, then adding 1 to both sides of the equation to adjoin the $\lambda=\emptyset$ term on the left, and comparing with the definition of $U(z,q)$, completes the proof. 
%One can prove the lemma combinatorially by replicating the proof of Theorem \ref{qgeneral} above, but weighting each part $\gamma$ in the proof by an additional multiplicative factor $f(n)$, with $n\in \mathcal S$ the multiplicity of the largest part of $\gamma$, which maps to $f(\operatorname{sm}(\lambda))$ under conjugation. 
\end{proof}

%It is interesting in Example \ref{cor1} that partition convolution on the left side generates, in a sense, unimodal sequences on the right. 
%Beyond the relation to unimodal sequences in combinatorics, 
%Example \ref{cor1} intersects the theory of modular forms: 
The rank generating function for strongly unimodal sequences is connected to mock modular and quantum modular forms (see e.g. \cite{FOR}). Example \ref{cor1} itself, equal to $(zq;q)_{\infty}^{-1}(z^{-1}q;q)_{\infty}^{-1}$ by \eqref{Cauchy2}, is modular for $z$ a root of unity, up to multiplication by a rational power of $q$. %Multiplying both sides by $(q;q)_{\infty}$ yields the crank generating function $C(z,q)$ of Andrews-Garvan \cite{AndrewsGarvan}. %, which is similarly deeply connected in modularity theory, on top of its relation to Ramanujan congruences.

 One may obtain different forms of partition convolution identities for $d_{\mathcal S}$ from those in Theorem \ref{Thm1} as consequences of Lemma \ref{convolemma4}, such as the following pair of identities.

\begin{example} \label{Cor3}
%Let $\operatorname{lg}(\lambda)$ denote the largest part of partition $\lambda$.  
For $S\subseteq \mathbb N$ a $q$-commensurate subset, $a(\lambda)=\ell(\lambda)$ (partition length) and $\sigma_0(n):=\sum_{d|n}1$ as usual, we have
\begin{flalign*}
\lim_{q\to 1} \frac{\sum_{\operatorname{sm}(\lambda)\in \mathcal S}(\mu_{\mathcal P}*\ell)(\lambda)q^{|\lambda|}}{\sum_{n\geq 1}\sigma_0(n)q^n}\  =\  d_{\mathcal S}.
\end{flalign*}
%Let $\operatorname{lg}(\lambda)$ denote the largest part of partition $\lambda$.  
For $S\subseteq \mathbb N$ a $q$-commensurate subset, $a(\lambda)=\operatorname{sz}(\lambda):=|\lambda|$ (partition size)  and $\sigma_1(n):=\sum_{d|n}d$, we have
\begin{flalign*}
\lim_{q\to 1} \frac{\sum_{\operatorname{sm}(\lambda)\in \mathcal S}(\mu_{\mathcal P}*\operatorname{sz})(\lambda)q^{|\lambda|}}{\sum_{n\geq 1}\sigma_1(n)q^n}  %\  =\  \lim_{q\to 1}\frac{\sum_{\operatorname{sm}(\lambda)\in \mathcal S}\frac{(\mu_{\mathcal P}*\operatorname{sz})(\lambda)}{\operatorname{sm}(\lambda)}q^{|\lambda|}}{\sum_{n\geq 1}\sigma_0(n)q^n}\  
\  =\    d_{\mathcal S}.
\end{flalign*}
\end{example}

%
%\begin{theorem}\label{convolemma2.5}% \label{qgeneral2}
%%Let $\operatorname{lg}(\lambda)$ denote the largest part of partition $\lambda$.  
%For $a(\lambda)$ a partition-theoretic function, we have 
%\begin{flalign*}
%\lim_{q\to 1}\frac{\sum_{\operatorname{sm}(\lambda)\in \mathcal S}(\mu_{\mathcal P}*a)(\lambda) q^{|\lambda|}}{\sum_{\lambda \in \mathcal P}(\mu_{\mathcal P}*a)(\lambda) q^{|\lambda|}}\   =\  d_{\mathcal S}.
%\end{flalign*} \end{theorem}
%
%
%

\begin{proof}
For the first identity, set $a(\lambda)=\ell(\lambda)$ and $f$ equal to the indicator function of $\mathcal S$ in Corollary \ref{convolemma4}. Then since $\ell((n))=1, \  \ell(\emptyset)=0$, and $\ell(\gamma\cdot(n))-\ell(\gamma)=1$ for any $\gamma\in \mathcal P$, we have
\begin{equation}\label{ell}
\widetilde{A}_n(q)=1+\sum_{\operatorname{sm}(\gamma){\geq} n}q^{|\gamma|}=\frac{1}{(q^n;q)_{\infty}}=\frac{(q;q)_{n-1}}{(q;q)_{\infty}}.
\end{equation}
Note from the right-hand side that $\widetilde{A}_n(q)$ does {\it not} satisfy either of the analytic conditions of Theorem \ref{Thm1} as $q\to 1$. However, by \eqref{ell}, it fits nicely into the right-hand side of Lemma \ref{convolemma4}:
\begin{flalign*}
\sum_{\operatorname{sm}(\lambda)\in \mathcal S}(\mu_{\mathcal P}*\ell)(\lambda) q^{|\lambda|}  \  =\  (q;q)_{\infty}\sum_{n\in \mathcal S}\frac{\widetilde{A}_n(q) q^n}{ (q;q)_n}\  =\  \sum_{n\in \mathcal S}\frac{q^n(q;q)_{n-1}}{ (q;q)_n}\  =\  \sum_{n\in \mathcal S}\frac{q^n}{1-q^n},
\end{flalign*}
which reduces the corollary to a rewriting of Example \ref{ex1} above. Dividing through by $\sum_{n\geq 1}\frac{q^n}{1-q^n}=\sum_{n\geq 1}\sigma_0(n)q^n$ and letting $q\to 1$ completes the proof of the first identity. 

We prove the second identity along very similar lines, setting $a(\lambda)=\operatorname{sz}(\lambda)=|\lambda|$. Then since $|(n)|=n, \  |\emptyset|=0$, and $|\gamma\cdot(n)|-|\gamma|=n$ for any $\gamma\in \mathcal P$, in this case we compute 
\begin{equation}\label{sz}
\widetilde{A}_n(q)=n+\sum_{\operatorname{sm}(\gamma){\geq} n}n q^{|\gamma|}=\frac{n}{(q^n;q)_{\infty}}=\frac{n\cdot (q;q)_{n-1}}{(q;q)_{\infty}}.
\end{equation}
Again, this $\widetilde{A}_n(q)$ does {not} satisfy the conditions of Theorem \ref{Thm1}; however, by \eqref{sz}, we have 
\begin{flalign*}
\sum_{\operatorname{sm}(\lambda)\in \mathcal S}(\mu_{\mathcal P}*\operatorname{sz})(\lambda) q^{|\lambda|}  \  =\  (q;q)_{\infty}\sum_{n\in \mathcal S}\frac{\widetilde{A}_n(q) q^n}{ (q;q)_n}\  =\  \sum_{n\in \mathcal S}\frac{nq^n(q;q)_{n-1}}{ (q;q)_n}\  =\  \sum_{n\in \mathcal S}\frac{nq^n}{1-q^n}.
\end{flalign*}
Now, the right-hand side of these equalities can be rewritten 
\begin{flalign}\label{above}
\sum_{n\in \mathcal S}\frac{nq^n}{1-q^n}\  =\  \sum_{n\in \mathcal S}n\sum_{k\geq 1}q^{nk}\  =\  \sum_{k\geq 1}\sum_{n\in \mathcal S}nq^{nk}.\end{flalign}
It follows from \eqref{qasymp} that
\begin{flalign}\label{above2}
\sum_{n\in \mathcal S}nq^{n}\  =\  q\frac{{d}}{{d}q}\sum_{n\in \mathcal S}q^{n}\   \sim\   d_{\mathcal S} \cdot \frac{q}{(1-q)^2},
\end{flalign}
which, combined with \eqref{above} above, gives
\begin{flalign*}
\sum_{n\in \mathcal S}\frac{nq^n}{1-q^n}\  \sim\  d_{\mathcal S}\cdot \sum_{k\geq 1}\frac{q^k}{(1-q^k)^2}\  =\  d_{\mathcal S}\cdot \sum_{k\geq 1}\frac{kq^k}{1-q^k}\  =\  d_{\mathcal S}\cdot \sum_{k\geq 1}\sigma_1(k)q^k,
\end{flalign*}
using well-known Lambert series identities in the right-hand equalities. Dividing through by $\sum_{n\geq 1}\sigma_1(n)q^n$ and letting $q\to 1$ gives the second identity.
\end{proof}

\begin{remark} %It is interesting to observe in Example \ref{Cor3} this correspondence: $\ell(\lambda)$ is the {number} of parts of $\lambda$ in the numerator of the first identity, while $\sigma_0(n)$ is the {number of divisors} of $n$ in the denominator; likewise, $\text{sz}(\lambda)$ is the {\it sum} of the parts in the second identity, whereas $\sigma_1(n)$ is the {sum of divisors}. 
Note that $\ell(\lambda)$ and $\operatorname{sz}(\lambda)$ are the $r=0$ and $r=1$ cases, respectively, of  $$\operatorname{sz}_{r}(\lambda) := \sum_{\lambda_i\in \lambda} \lambda_{i}^{r},$$ where the sum is over the parts $\lambda_i\in \mathbb N$ of partition $\lambda$.\footnote{One might write $\operatorname{sz}_r(\lambda)=|\lambda|_r$ to generalize the standard absolute value notation for partition size.} More generally, inserting $\operatorname{sz}_{r}(\lambda)$ for any integer $r\geq 0$ in place of $\ell(\lambda)$ and $\operatorname{sz}(\lambda)$, the steps of the proof of Example \ref{Cor3} continue to hold, but yielding $(\mu_{\mathcal P}*\operatorname{sz}_{r})(\lambda)$ in the numerator and $\sigma_{r}(n):=\sum_{d|n}d^r$ in the denominator.
\end{remark}

\begin{remark}
Note from \eqref{above2} and from the fact $\frac{q}{(1-q)^2}=\sum_{k\geq 1}\frac{\varphi(k)q^k}{1-q^k}=\sum_{k\geq 1}kq^k$, that 
\begin{equation}
\lim_{q\to 1}\frac{\sum_{n\in \mathcal S}nq^{n}}{\sum_{n\geq 1}nq^n}=d_{\mathcal S},
\end{equation}
which can also be found by applying L'Hospital's rule to \eqref{qdens} as $q\to 1$, as noted above.
\end{remark}

We anticipate that formulas similar to those proved here will hold for sums over partitions with parts restricted in other ways to subsets $\mathcal S$ such as in Corollary \ref{qgeneral2}, as well as for $q\to \zeta$ a root of unity not equal to one, along the lines of those in \cite{Paper1}; but pursuing this was outside of our present aims. Using limiting techniques such as the Hardy-Ramanujan circle method \cite{HR}, techniques of Watson \cite{Watson} and other methods from the universe of $q$-series and modular forms --- as well as limiting values of special functions at roots of unity --- perhaps closed formulas can be computed for $d_{\mathcal S}$ beyond the cases of $\mathcal S$ being integers in arithmetic progressions, $k$th power-free integers, and other subsets of $\mathbb N$ with arithmetic densities already explicitly known.

\section*{Acknowledgments}
We wish to express our gratitude to A. V. Sills for computing the values in Table 2, and to Paul Pollack for providing advice and references on Tauberian theorems. Moreover, the second author thanks G. E. Andrews and Prof. Sills for discussions on convergence, divergence and $q$-analogues that strongly informed this work.

\end{document}